\newtheorem{thm}{Theorem}[section]
\newtheorem{cor}[thm]{Corollary}
\newtheorem{lem}[thm]{Lemma}
\theoremstyle{definition}
\newtheorem{defn}[thm]{Definition}
\theoremstyle{remark}
\numberwithin{equation}{section}
\newcommand{\be}{\begin{equation}}
\newcommand{\ee}{\end{equation}}
\newcommand{\abs}[1]{\left\vert#1\right\vert}
\newcommand{\R}{\mathbb R}
\newcommand{\eps}{\varepsilon}
\newcommand{\p}{\partial}
\newcommand{\comment}[1]{}
\newcommand\ddfrac[2]{\frac{\displaystyle #1}{\displaystyle #2}}
\newcommand{\mres}{\mathbin{\vrule height 1.6ex depth 0pt width
0.13ex\vrule height 0.13ex depth 0pt width 1.3ex}}
\begin{document}

\title[Improvement of flatness for vector valued free boundary problems]{Improvement of flatness for vector valued free boundary problems}
\author{D. De Silva}
\address{Daniela De Silva  \newline \indent Department of Mathematics  \newline \indent  Barnard College, Columbia University,  \newline \indent  New York, NY 10027}
\email{\tt  desilva@math.columbia.edu}
%\author{O. Savin}
%\address{Department of Mathematics, Columbia University, New York, NY 10027}\email{\tt  savin@math.columbia.edu}
\author[G. Tortone]{G. Tortone}\thanks{}
\address{Giorgio Tortone \newline \indent Dipartimento di Matematica
	\newline\indent
	Alma Mater Studiorum Universit\`a di Bologna
	\newline\indent
	 Piazza di Porta San Donato 5, 40126 Bologna}
\email{giorgio.tortone@unibo.it}

\thanks{G. Tortone is partially supported by the ERC Advanced Grant 2013 n.\ 339958 Complex Patterns for Strongly Interacting Dynamical Systems - COMPAT, held by Susanna Terracini.}
%\subjclass{}%
\keywords{One-phase free boundary problem; Harnack Inequality; vectorial problem; viscosity solution; improvement of flatness}

\date{\today}%
%\dedicatory{}%
%\commby{}%
% ----------------------------------------------------------------
\begin{abstract}
For a vectorial Bernoulli-type free boundary problem, with no sign assumption on the components, we prove that flatness of the free boundary
implies $C^{1,\alpha}$ regularity, as well-known in the scalar case \cite{AC,C2}.\\
While in \cite{MTV2} the same result is obtained for minimizing solutions by using a reduction to the scalar problem, and the NTA structure of the regular part of the free boundary, our result  uses directly a viscosity approach on the vectorial problem, in the spirit of \cite{D}.\\
We plan to use the approach developed here in vectorial free boundary problems involving a fractional Laplacian, as those treated in the scalar case in \cite{DR, DSS}.
\end{abstract}
\maketitle
% ----------------------------------------------------------------

\section{Introduction}

This note is concerned with the vector valued one-phase free boundary problem,
\be \begin{cases} \label{VOP}
\Delta U=0 & \text{in $\Omega(U):= \Omega \cap \{|U| > 0\}$;}\\
 |\nabla |U||=1 & \text{on $F(U):= \Omega \cap \p \Omega(U).$}
\end{cases}\ee
Here $U(x):= (u^1(x), \ldots, u^m(x)), x \in \Omega,$ with $\Omega$ a bounded domain in $\R^n$ . In the scalar case, $m=1$, \eqref{VOP} is the Euler-Lagrange equation associated to the classical one-phase Bernoulli energy functional ($u \geq 0$),
\be\label{J}
J(u, \Omega):= \int_{\Omega} (|\nabla u|^2 + \chi_{\{u>0\}}) \ dx.
\ee

Minimizers of $J$ were first investigated systematically by Alt and Caffarelli. Two fundamental questions are answered in the pioneer article \cite{AC}, that is the Lipschitz regularity of minimizers and the regularity of ``flat" free boundaries, which in turns gives the almost-everywhere regularity of minimizing free boundaries. The viscosity approach to the associated free boundary problem was later developed by Caffarelli in \cite{C1,C3,C2}. In particular in \cite{C2} the regularity of flat free boundaries is obtained. There is a wide literature on this problem and the corresponding two-phase problem, and we refer the reader to the paper \cite{DFS} for a comprehensive survey.

The system \eqref{VOP} can also be seen as the Euler-Lagrange equations associated to a vectorial Alt-Caffarelli type functional. Namely, given a regular open set $\Omega\subset \R^n$ and $\Phi=(\varphi^1,\ldots,\varphi^m) \in H^{1/2}(\partial \Omega, \R^m)$, one can consider the vectorial free boundary problem
\be\label{min}
\text{min}\left\{\int_{\Omega}{\abs{\nabla U}^2\mathrm{d}x} + \abs{\Omega(U)}\colon U \in H^1(\Omega,\R^m),\, U=\Phi \text{ on }\partial \Omega \right\}.
\ee

In  \cite{%\emph{A minimization problem with free boundary related to a cooperative system} -Caffarelli, Shangholian and Yeressian
CSY}, the authors initiated the study of this problem where several flows
are involved, and interact whenever there is a phase transition. In particular, they applied a reduction method to reduce the problem to its scalar counterpart by assuming nonnegativity of the components of $U$. More precisely, under this assumption, the components are weak solutions of
$$
\Delta u^i = w^i \mathcal{H}^{n-1}\mres(\Omega \cap \partial^*\{\abs{U}>0\}), \quad\mbox{for }i=1,\dots,m,
$$
with
$$
w^i(x)= \lim_{y \in \{\abs{U}>0\},y\to x} \frac{u^i(y)}{\abs{U}(y)}.
$$

Recently in \cite{MTV2}, a different group of authors removed the sign assumption on the components. As expected, in this case the structure of the singular
set changes and the set of branching points $\mathrm{Sing}_2(F(U))$ naturally arises. More precisely, the following theorem holds.
\begin{thm}[\cite{MTV2}]
The problem \eqref{min} admits a solution $U \in H^1(\Omega;\R^m)$. Moreover, any solution is Lipschitz continuous in $\Omega \subset \R^n$ and the set $\Omega(U)$ has a locally finite perimeter in $\Omega$. More precisely, the free boundary $F(U)$ is a disjoint union of a regular part $\mathrm{Reg}(F(U))$, a (one-phase) singular set $\mathrm{Sing}_1(F(U))$ and a set of branching
points $\mathrm{Sing}_2(F(U))$:
\begin{enumerate}
  \item[1.] $\mathrm{Reg}(F(U))$ is an open subset of $F(U)$ and is locally the graph of a smooth function.
\item[2.] $\mathrm{Sing}_1(F(U))$ consists only of points in which the Lebesgue density of $\Omega(U)$ is strictly between $1/2$ and $1$. Moreover, there is $n^* \in \{5, 6, 7\}$ such that:
    \begin{itemize}
      \item if $n < n^*$, then $\mathrm{Sing}_1(F(U))$ is empty;
\item if $n = n^*$, then $\mathrm{Sing}_1(F(U))$ contains at most a finite number of isolated points;
\item if $n > n^*$, then the $(n - n^*)$-dimensional Hausdorff measure of $\mathrm{Sing}_1(F(U))$ is locally finite in $\Omega$.
    \end{itemize}
\item[3.] $\mathrm{Sing}_2(F(U))$ is a closed set of locally finite $(n-1)$-Hausdorff measure in $\Omega$ and consists of points in which the Lebesgue density of $\Omega(U)$ is $1$ and the blow-up limits are linear
functions.
\end{enumerate}
\end{thm}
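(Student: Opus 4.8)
The plan is to run the classical Alt--Caffarelli program---existence by the direct method, Lipschitz regularity and non-degeneracy by comparison with harmonic replacements, finite perimeter of $\Omega(U)$, and a blow-up analysis driven by a Weiss-type monotonicity formula---inserting at each step the modification forced by the sign-changing vectorial structure. Existence is the easy part: given a minimizing sequence $U_k$ with $U_k=\Phi$ on $\partial\Omega$, the Dirichlet energies are bounded, so along a subsequence $U_k\rightharpoonup U$ in $H^1$, $U_k\to U$ in $L^2$ and a.e.; weak lower semicontinuity controls $\int_\Omega\abs{\nabla U}^2$, and since $\abs{U(x)}>0$ forces $\abs{U_k(x)}>0$ for $k$ large, Fatou gives $\abs{\Omega(U)}\le\liminf_k\abs{\Omega(U_k)}$, so $U$ is a minimizer. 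Any minimizer of \eqref{min} is in particular a local minimizer, so the comparison arguments below apply to it.

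\emph{Lipschitz regularity and non-degeneracy.} Each component $u^i$ is harmonic in the open set $\Omega(U)$ but, having no sign, is not subharmonic; the substitute I would use is that $\abs{U}=\big(\sum_i(u^i)^2\big)^{1/2}$ is subharmonic in $\Omega(U)$. Indeed, writing $v:=\abs{U}^2$ one has $\Delta v=2\abs{\nabla U}^2$ there, while Cauchy--Schwarz gives $\abs{\nabla v}^2=4\bigl|\sum\nolimits_i u^i\nabla u^i\bigr|^2\le 4\,v\,\abs{\nabla U}^2$, whence $\Delta\abs{U}=\Delta\sqrt v=\abs{U}^{-1}\abs{\nabla U}^2-\tfrac14\abs{U}^{-3}\abs{\nabla v}^2\ge 0$. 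Since $\abs{U}$ vanishes continuously on $F(U)$, its extension by $0$ is subharmonic in $\Omega$ and thus lies below its harmonic replacement in every ball. With this one runs the Alt--Caffarelli comparison: at $x_0\in F(U)$ with $\ov{B_r(x_0)}\subset\Omega$, comparing $U$ with its harmonic replacement $V$ in $B_r$ and using minimality of the energy yields $\int_{B_r}\abs{\nabla(U-V)}^2\le\abs{B_r\setminus\Omega(U)}$; then an estimate for the nonnegative superharmonic function $\abs{V}-\abs{U}$ (which vanishes on $\partial B_r$ and equals $\fint_{\partial B_r}\abs{U}$ at $x_0$) forces $\fint_{\partial B_r}\abs{U}\le C r$, and combining this with interior gradient bounds for the harmonic $u^i$ on balls $B_{d(x)/2}(x)\subset\Omega(U)$ gives $\abs{\nabla U}\le C$ in $\Omega$. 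For the matching non-degeneracy bound $\sup_{B_r(x_0)}\abs{U}\ge c\,r$ at $x_0\in\ov{\Omega(U)}$ I would again argue by comparison: replacing $U$ in $B_r$ by the competitor that radially interpolates its harmonic replacement down to $0$ on $B_{r/2}$ while keeping the direction $U/\abs{U}$ fixed would, if $\abs{U}$ were too small, strictly lower the energy, since the volume saved is of order $r^n$ and the Dirichlet cost is of lower order.

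\emph{Finite perimeter and structure of $F(U)$.} The Lipschitz and non-degeneracy estimates yield two-sided density bounds $c\,\abs{B_r}\le\abs{\Omega(U)\cap B_r(x_0)}\le\abs{B_r}$ at free boundary points, from which the classical criterion (cf.\ \cite{AC}) gives that $F(U)$ has locally finite $\mathcal H^{n-1}$ measure and $\Omega(U)$ locally finite perimeter. For the fine structure the main tool is a Weiss-type monotonicity formula: $W_{x_0}(r):=r^{-n}\int_{B_r(x_0)}\big(\abs{\nabla U}^2+\chi_{\Omega(U)}\big)-r^{-n-1}\int_{\partial B_r(x_0)}\abs{U}^2$ is non-decreasing in $r$, so (using the Lipschitz bound for compactness) every blow-up limit $U_0(x)=\lim_{k}r_k^{-1}U(x_0+r_kx)$ along some $r_k\to0$ exists and is a $1$-homogeneous global minimizer of the same functional. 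One then classifies such $U_0$ into three types, which drives the trichotomy: (a) half-plane solutions $U_0(x)=(x\cdot e)_+\,\xi$ with $e\in S^{n-1}$, $\xi\in S^{m-1}$, at which $\Omega(U_0)$ has density $\tfrac12$; (b) genuine one-phase singular cones, where all components $u^i_0$ are multiples of a single scalar one-phase minimizer so that $\abs{U_0}$ is an Alt--Caffarelli singular cone---possible only for $n\ge n^*$ with $n^*\in\{5,6,7\}$; (c) linear maps $U_0(x)=Lx$, where $\Omega(U_0)$ has density $1$ (branch points). At type-(a) points the components turn out to be asymptotically parallel and $\Omega(U)$ is NTA on its positivity side, so $\abs{U}$ behaves like a scalar one-phase solution and the flat-implies-$C^{1,\alpha}$ theory of \cite{C2} (equivalently the viscosity improvement of flatness of \cite{D}, which the present paper re-derives directly for the system) applies and $\mathrm{Reg}(F(U))$ is locally a smooth graph. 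Points of type (b) form $\mathrm{Sing}_1(F(U))$, whose Hausdorff dimension is controlled by Federer dimension reduction reducing to the scalar singular dimension; points of type (c) form the closed set $\mathrm{Sing}_2(F(U))$, and one shows $\mathcal H^{n-1}\big(\mathrm{Sing}_2(F(U))\cap K\big)<\infty$ by combining the stratification of the singular set with the Lipschitz bound on $U$.

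\emph{Main obstacle.} Two difficulties have no scalar counterpart. The first is the Lipschitz bound without a sign on the components: the remedy is to replace the subharmonicity of $u$ by that of $\abs{U}$, but one must then check that all the Alt--Caffarelli comparisons survive the fact that $\abs{U}$ is only subharmonic, not harmonic, on its positivity set. The second, and harder, is the branching set $\mathrm{Sing}_2(F(U))$: there $\Omega(U)$ has full Lebesgue density, so $F(U)$ is invisible to perimeter-theoretic tools, and pinning down both its occurrence and its $(n-1)$-dimensional size requires the finer monotonicity and stratification machinery rather than a direct comparison argument.
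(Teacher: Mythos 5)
This theorem is not proved in the paper at hand; it is quoted from the reference \cite{MTV2} (Mazzoleni--Terracini--Velichkov) purely as background and motivation. The paper's own contribution is Theorem \ref{main}, an improvement-of-flatness result for viscosity solutions of \eqref{VOP}, proved in Sections 2--3 via a vectorial Harnack inequality and a linearization argument in the spirit of \cite{D}, precisely to \emph{avoid} the reduction to the scalar problem and the NTA geometry that \cite{MTV2} use to obtain smoothness of the regular part. There is therefore no proof in this paper against which your sketch can be checked line by line.

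That said, your outline does broadly track the strategy of \cite{MTV2}: direct method for existence, subharmonicity of $\abs{U}$ as the substitute for the missing sign condition, Alt--Caffarelli comparison for Lipschitz continuity and non-degeneracy, a Weiss monotonicity formula and blow-up classification into half-plane solutions, genuine one-phase singular cones, and linear maps, followed by Federer dimension reduction. One point where your sketch papers over a genuine difficulty: you assert that the two-sided density bounds at free boundary points give locally finite $\mathcal{H}^{n-1}$-measure of $F(U)$ ``by the classical criterion''. But at branch points in $\mathrm{Sing}_2(F(U))$ the Lebesgue density of $\Omega(U)$ is $1$, so the exterior density bound fails and the Alt--Caffarelli criterion does not apply to $F(U)$ as a whole. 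What the comparison arguments actually yield is locally finite perimeter of $\Omega(U)$, i.e.\ control on the reduced boundary $\partial^*\Omega(U)$, which simply does not see the branch points; the separate statement that $\mathcal{H}^{n-1}(\mathrm{Sing}_2(F(U))\cap K)<\infty$ is one of the genuinely new contributions of \cite{MTV2}, and your sketch defers it to unspecified ``stratification machinery'' rather than supplying an argument. If you want to attribute the $C^{1,\alpha}$ regularity of $\mathrm{Reg}(F(U))$ to the present paper's method rather than to the scalar reduction, you may do so, but then it is Theorem \ref{main} (together with the nondegeneracy hypothesis \eqref{nondegenra}, satisfied by minimizers) that closes that step, not \cite{C2} applied directly to $\abs{U}$.
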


As pointed out in \cite{MTV1}, problem \eqref{min} is also related to a class of shape optimization problems involving the eigenvalues of the Dirichlet Laplacian. Precisely, if $U_*$ is the vector whose components are the Dirichlet eigenfunctions on the set $\Omega_*$ which solves the shape optimization problem
\be\label{spectral}
\mbox{min}\left\{\sum_{i=1}^m \lambda_i(\Omega)\colon \Omega \subset \R^n \mbox{ open}, \abs{\Omega}=1\right\},
\ee
then $U_*$ can be seen as quasi-minimizers of \eqref{min}. Indeed, \cite{MTV2} follows some of the main ideas developed in \cite{MTV1}. In \cite{KL2, KL1}, a different set of authors considered an even more general class of spectral functionals than \eqref{spectral} and used a viscosity approach based on an Harnack inequality and a linearization, in the same spirit of the method developed in \cite{D} by the first author.

In this note, we are also inspired by \cite{D}, and we use a vectorial viscosity approach which does not reduce the problem to the scalar one-phase problem, as done in \cite{MTV2}. Since we work directly on the problem \eqref{VOP}, our proof (in particular the choice of barriers) is more  straightforward then the one in \cite{KL2,KL1} as it takes advantage of the fact that the norm $\abs{U}$ is a viscosity subsolution to the scalar one-phase problem.\\

One of the objectives of this note is to develop a method suitable for other vectorial problems, for example Bernoulli-type problems involving nonlocal diffusion. In particular, in \cite{CRS, DR, DSS} the authors studied the regularity of a one-phase scalar free boundary problem for the fractional Laplacian. While in \cite{CRS} general properties like optimal regularity, nondegeneracy and classification of global solutions were proved, in \cite{DR,DSS} the authors developed a viscosity approach in order to prove that flat free boundaries are actually $C^{1,\alpha}$. In a forthcoming paper, we plan to extend these results to the vectorial case, following the approach developed in this paper.\\

We now state our main theorem. From now on, we denote by $\{e_i\}_{i=1,\ldots,n}$,$\{f^i\}_{i=1,\ldots,m}$ canonical basis in $\R^n$ and $\R^m$ respectively. Unit directions in $\R^n$ and $\R^m$ will be typically denoted by $e$ and $f$. The Euclidean norm in either space is denoted by $|\cdot|,$ while the dot product is denoted by $\langle \cdot, \cdot \rangle.$

\begin{defn}\label{solutionnew} We say that $U \in C(\Omega, \R^m)$ is a viscosity solution to \eqref{VOP} in $\Omega$ if
$$\Delta u^i=0 \quad \text{in $\Omega(U)$}, \quad \forall i=1,\ldots,m;$$
and the free boundary condition is satisfied in the following sense. Given $x_0 \in F(U)$, and a test function $\varphi \in C^2$ in a neighborhood of $x_0$, with $|\nabla \varphi|(x_0) \neq 0,$ then
\begin{enumerate}
\item If $|\nabla \varphi|(x_0) >1$, then for all unit directions $f$ in $\R^m$, $\langle U, f\rangle $ cannot be touched by below by $\varphi$ at $x_0.$
\item If $|\nabla \varphi|(x_0) <1,$ then $\abs{U}$ cannot be touched by above by $\varphi$ at $x_0.$
\end{enumerate}
\end{defn}

Our main theorem reads as follows.

\begin{thm}\label{main} Let $U$ be a viscosity solution to \eqref{VOP} in $B_1$. There exists a universal constant $\bar \eps >0$ such that if $U$ is $\bar \eps$ flat in $B_1$, i.e. for some unit directions $e \in \R^n, f\in \R^m$
\be \label{flat} |U(x) - f \langle x , e \rangle ^+| \leq \bar \eps, \quad \text{in $B_1$,}
\ee
and \be\label{nondegenra}
 |U| \equiv 0 \quad \text{in $B_1 \cap \{\langle x , e \rangle  < - \bar \eps\}$}\ee
then $F(U) \in C^{1,\alpha}$ in $B_{1/2}.$
\end{thm}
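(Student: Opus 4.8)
The plan is to follow the viscosity "improvement of flatness + iteration" scheme of \cite{D}, adapted to the vectorial setting. The proof reduces to a single \emph{improvement of flatness} lemma: there exist universal constants $\bar\eps>0$ and $\rho\in(0,1)$ such that if $U$ is a viscosity solution in $B_1$ satisfying the flatness hypothesis \eqref{flat} and the nondegeneracy hypothesis \eqref{nondegenra} with some $\eps\le\bar\eps$ (in directions $e,f$), then in $B_\rho$ the solution $U$ is $\eps\rho/2$ flat in some new pair of directions $e',f'$ with $|e-e'|+|f-f'|\le C\eps$. Granting this lemma, one rescales $U_\rho(x)=U(\rho x)/\rho$, which is again a viscosity solution of \eqref{VOP} (the problem is $1$-homogeneous), and iterates: this produces a Cauchy sequence of directions $e_k\to e_\infty(x_0)$, shows the free boundary is differentiable at $0$ with a Hölder modulus $\rho^{k\alpha}$ on the flatness, and, performing the same argument at every free boundary point near $0$, gives $F(U)\in C^{1,\alpha}$ in $B_{1/2}$. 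This last packaging is routine once the lemma is in hand, so the whole weight of the proof is in the improvement of flatness.

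For the improvement of flatness lemma I would argue by compactness/linearization. First one needs a \emph{Harnack inequality} for the flatness: if $U$ satisfies $f\langle x,e\rangle^+ - \eps \le \langle U,f\rangle$-type trapping (more precisely $a_\varepsilon \le \langle U,f\rangle - \langle x,e\rangle \le b_\varepsilon$ on the positivity set in a suitable band, with $b_\varepsilon-a_\varepsilon \le \eps$), then the oscillation $b_\varepsilon - a_\varepsilon$ improves by a fixed factor when we pass to a smaller ball, with the improvement holding away from the free boundary by the interior Harnack inequality for harmonic functions and up to the free boundary by comparison with explicitly constructed radial barriers. The key structural input, emphasized by the authors in the introduction, is that $|U|$ is a viscosity subsolution of the \emph{scalar} one-phase problem: indeed $\Delta|U| \ge 0$ in $\Omega(U)$ (since $|U|\Delta|U| = |U|\Delta|U|$ and $|\nabla|U||\le|\nabla U|$ gives $\Delta|U|\ge 0$ weakly where $|U|>0$), and the free boundary condition part (2) of Definition \ref{solutionnew} says exactly that $|U|$ cannot be touched above by a test function with small gradient; simultaneously each component $\langle U,f\rangle$ is harmonic in $\Omega(U)$ and, by part (1), satisfies a one-sided (supersolution-type) free boundary inequality. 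This asymmetry — subsolution on the norm, "supersolution" on components — is precisely what makes the one-phase barrier arguments of \cite{D} go through in each step.

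Iterating the Harnack inequality, the rescaled and renormalized functions $\tilde U_\eps(x) := (U(x)-f\langle x,e\rangle^+)/\eps$ (restricted to the positivity set and reflected/extended appropriately) have, in the limit $\eps\to 0$, graphs that converge uniformly in $B_{1/2}\cap\{\langle x,e\rangle>0\}$ to a limit $\tilde U_0$ whose components, one shows, are harmonic in the half-ball, with a Neumann-type condition on $\{\langle x,e\rangle=0\}$ coming from the free boundary relation after linearization; in fact the scalar component along $f$ (the "dominant" direction, linearizing $|\nabla|U||=1$) solves the standard linearized one-phase problem $\Delta w = 0$ in $B_{1/2}^+$, $w_e = 0$ on $B_{1/2}'$, while the components orthogonal to $f$ are $O(\eps)$ smaller and contribute only at higher order. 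Classical $C^{1,\alpha}$ (indeed $C^\infty$) estimates for this Neumann problem give that $\tilde U_0$ is within $\rho/4$ (say) of a linear function $\langle x',\nu\rangle$ on $B_\rho^+$; unwinding the normalization, this linear correction tilts $e$ to a new direction $e'$ and $f$ to $f'$, yielding the claimed improved flatness. The main obstacle is precisely this linearization step: one must rule out that the transversal components $\langle U,f^\perp\rangle$ degrade the flatness — i.e. prove they are genuinely lower order — and one must show the limiting problem is the \emph{scalar} linearized one-phase problem despite the vectorial free boundary condition; handling the free boundary condition \eqref{VOP} simultaneously for all test directions $f$ (Definition \ref{solutionnew}(1)–(2)) in the compactness argument, and verifying the limit satisfies the Neumann condition in the viscosity sense, is the technical heart of the paper.
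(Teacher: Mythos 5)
Your proposal follows essentially the same route as the paper: a vectorial Harnack inequality exploiting the subharmonicity of $|U|$ in $\Omega(U)$ and the two one-sided viscosity free boundary conditions of Definition \ref{solutionnew}, followed by compactness/linearization to a half-space problem, classical regularity for the linearized problem, and a routine iteration to $C^{1,\alpha}$. You also correctly identify the key structural point the authors emphasize, namely that $|U|$ is a scalar subsolution while each $\langle U,f\rangle$ is a scalar supersolution.

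One imprecision worth flagging, since it is exactly where the technical heart lies: you describe the components orthogonal to $f$ as ``$O(\eps)$ smaller and contributing only at higher order.'' That is the right intuition for the linearization of $|\nabla|U||=1$ (where $|U|-u^1 = O(\eps^2 x_n)$), but the transversal components do \emph{not} disappear from the limiting problem. After the normalization $\tilde u^i_k = u^i_k/\eps_k$, the preliminary bound $|u^i| \le C\eps(x_n+\eps)^+$ (Lemma \ref{change}) gives $\tilde u^i_k = O(x_n)$, so the limits $\tilde u^i_\infty$ are nontrivial harmonic functions in the half-ball satisfying the \emph{Dirichlet} condition $\tilde u^i_\infty = 0$ on $\{x_n=0\}$ (see \eqref{linearized}), alongside the Neumann condition $\partial_n\tilde u^1_\infty = 0$ for the dominant component. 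In the improvement of flatness, linearizing $\tilde u^i_\infty \approx M^i x_n$ tilts the direction $f$ at \emph{leading} order $\eps_k$, producing the new $\bar f_k$; so while these components do not enter the linearized free boundary condition for $\tilde u^1_\infty$, they cannot be dropped. Your lemma statement does allow $f$ to rotate to $f'$, so you have the right conclusion; just be aware that the argument must carry the transversal components through the compactness step as genuine solutions of a (decoupled) Dirichlet problem, not as error terms. A second smaller point: the Harnack step in the paper (Lemma \ref{fbharnack}) imposes a separate smallness hypothesis $|u^i|\le\eps^{3/4}$ on the transversal components, which must be propagated in the iteration and is supplied by Lemma \ref{change}; this is a bookkeeping detail your sketch does not surface but would need to be in place for the iterated oscillation decay (Corollary \ref{AA}) to close.
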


We remark that condition \eqref{nondegenra} is satisfied by flat minimizing solutions in view of non-degeneracy \cite[Section 2.1]{MTV2}.

Notice that in \cite{MTV2} the authors used a smaller class of viscosity solutions in which property $(i)$ is replaced by the following:

$$\text{(i') \ If $|\nabla \varphi|(x_0) >1$, then $\abs{U}$ cannot be touched by below by $\varphi$ at $x_0$.}$$

Indeed, in \cite[Lemma 3.2.]{MTV2} they proved that if $U$ is a minimizing free boundary, then for every $x_0 \in \mathrm{Reg}(F(U))\cup \mathrm{Sing}_1(F(U))$ there exists a small radius $r>0$ such that $U$ is a viscosity solution of
\be \begin{cases}
\Delta U=0 & \text{in $\Omega(U)\cap B_r(x_0)$;}\\
 |\nabla |U||=1 & \text{on $F(U)\cap B_r(x_0),$}
\end{cases}\ee
in the sense of (i)'-(ii).  The larger class in Definition \ref{solutionnew} is better suited for the strategy of our proof,
which relies on a vectorial Harnack inequality and improvement of flatness technique. Details of the Harnack inequality are carried on in Section 2, while the improvement of flatness argument is presented in Section 3.\\

%\begin{proof}
%  By [Lemma 2.6, D] we already know that the symmetric reflection of $u^1$ with respect to $\{x_n=0\}$ is a classical solution to the classical Neumann problem for the Laplace operator and $u^1\in C^\infty(B_1\cap \{x_n\geq 0\})$.\\
%  Fix $i=2,\ldots,m$, let
%  $$
%  u_*^i(x) =
%  \begin{cases}
%    u^i(x',x_n) & \mbox{in } B_1 \cap \{x_n\geq 0\}\\
%    -u^i(x',-x_n) & \mbox{in } B_1\cap \{x_n < 0\},
%  \end{cases}
%  $$
%  with $x'=(x_1,\ldots,x_{n-1})$. We claim that $u^i_*$ is harmonic in $B_1$ in the viscosity sense, namely it is a classical solution and hence smooth in $B_1$.\\
%  Indeed, let $P$ be a polynomial touching $u^i_*$ at $\overline{x} \in B_1$ strictly by above, We need to
%show that $\Delta P \geq 0$. By the definition of $u^i_*$, we only need to consider the case $\overline{x} \in B_1 \cap \{x_n=0\}$, but since $P(\overline{x})=0$ and $P\leq 0$ in a neighborhood of $\overline{x}$, we easily get the desired inequality.
%\end{proof}
%By direct computation, we also get that $|\widetilde{U}|$ satisfies the following Neumann problem for Laplace subsolutions
%\be
%\begin{cases}
%  \Delta |\widetilde{U}| \geq 0 & \mbox{in } B_{1/2}\cap \{x_n >0\}\\
%  \frac{\partial}{\partial x_n} |\widetilde{U}|=0 & \mbox{on }B_{1/2}\cap \{x_n=0\}.
%\end{cases}
%\ee
\section{Harnack type inequality}
In this Section we will prove a Harnack type inequality for solutions to problem \eqref{VOP}. Precisely, the following is our main theorem.

\begin{thm}\label{holder}
There exists a universal constant $\overline{\eps}>0$ such that, if $U$ solves \eqref{VOP} in $B_1$, and for some point $x_0 \in B^+_1(U) \cup F(U),$
\be\label{flat_2} x_n +a_0 \leq u^1 \leq |U| \leq (x_n+b_0)^+ \quad \text{in $B_r(x_0) \subset B_1$,}
\ee  with $$b_0 -a_0 \leq  \bar\eps r,$$
and
$$|u^i|\leq r\left( \frac{b_0-a_0}{r}\right)^{3/4} \quad \text{in $B_{1/2}(x_0)$}, \quad i=2,\ldots, m,$$ then
\be\label{flat_2} x_n +a_1 \leq u^1 \leq |U| \leq (x_n+b_1)^+ \quad \text{ in $B_{r/20}(x_0)$,}
\ee
with $$a_0 \leq a_1 \leq b_1 \leq b_0, \quad  b_1 - a_1= (1-c)(b_0-a_0) ,$$ for $0<c<1$ universal.
 \end{thm}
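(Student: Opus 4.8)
The plan is to run the standard dichotomy argument from \cite{D}, adapted to the vectorial setting, exploiting the two key structural facts available here: $|U|$ is a viscosity \emph{subsolution} of the scalar one-phase problem and $u^1$ is harmonic in $\Omega(U)$ with $u^1 \le |U|$. After rescaling we may assume $x_0 = 0$, $r=1$, so that $x_n + a_0 \le u^1 \le |U| \le (x_n + b_0)^+$ in $B_1$ with $b_0 - a_0 \le \bar\eps$ and $|u^i| \le (b_0-a_0)^{3/4}$ for $i \ge 2$. Fix a point $\bar x$, say $\bar x = \frac{1}{5}e_n$ (chosen in $\{x_n > b_0\text{-ish}\}$ so it lies well inside $B_1^+(U)$ once $\bar\eps$ is small), and look at the harmonic function $w := u^1 - (x_n + a_0) \ge 0$ on $B_{1/10}(\bar x) \subset \Omega(U)$. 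Harnack's inequality for $w$ gives the dichotomy: either $w(\bar x) \ge \tfrac12(b_0-a_0)\cdot c_0$ or $w(\bar x) \le \tfrac12(b_0-a_0)(1-c_0)$ for a small universal $c_0$ — equivalently, either $u^1$ (hence $|U|$) is trapped noticeably above the hyperplane $\{x_n = -a_0\}$ near $\bar x$, or it is trapped noticeably below $\{x_n = -b_0\}$ near $\bar x$.

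In the first case I would propagate the improved lower bound: on $B_{1/10}(\bar x)$ we have $u^1 - (x_n + a_0) \ge c_1(b_0 - a_0)$ by Harnack, and I want to slide this across a chain of balls down toward $F(U)$. The natural device is to compare $u^1$ from below on an annular region with the harmonic function that equals $x_n + a_0$ plus a small positive bump supported where we already have the gain; the radial-type subsolution barrier $V$ built in \cite{D} (harmonic in an annulus, vanishing on the outer sphere, positive with controlled normal derivative on the inner sphere) works verbatim since $u^1$ is harmonic and the free boundary condition only needs $u^1$ to be a supersolution of $|\nabla v| \le 1$ there — which, because $u^1 \le |U|$ and $|U|$ is a subsolution, is exactly what the gradient comparison at $F(U)$ delivers. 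This upgrades $a_0$ to $a_1 = a_0 + c(b_0-a_0)$ on $B_{1/20}$, keeping $b_1 = b_0$; the constraint $|u^i| \le (b_0-a_0)^{3/4}$ is preserved trivially since we did not touch it, and it is precisely what guarantees $|U| \le u^1 + C(b_0-a_0)^{3/2}$, so the chain of inequalities $u^1 \le |U| \le (x_n+b_0)^+$ survives the perturbation with room to spare. In the second case the argument is symmetric but uses that $|U|$ is a \emph{subsolution}: one pushes the plane $\{x_n = b_0\}$ down to $\{x_n = b_1\}$, $b_1 = b_0 - c(b_0 - a_0)$, using a supersolution barrier touching $|U|$ from above and the condition $(ii)$ in Definition \ref{solutionnew}, while $a_1 = a_0$; again the smallness of the other components keeps $u^1$ squeezed under the lowered plane.

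The main obstacle, and the only genuinely vectorial point, is controlling the components $u^i$, $i \ge 2$, when we apply the free boundary condition: the barrier arguments are about the scalar functions $u^1$ and $|U|$, but the free boundary condition in Definition \ref{solutionnew} is phrased either for $\langle U, f\rangle$ (case $|\nabla\varphi|>1$) or for $|U|$ (case $|\nabla\varphi|<1$), so one must be careful that the barrier's gradient is correctly on the right side of $1$ at the contact point and that touching happens for the right scalar quantity. Here the hypothesis $|u^i| \le r((b_0-a_0)/r)^{3/4}$ is essential: it is a higher-order smallness (exponent $3/4 > 1/2$) ensuring that $|U|$ and $u^1$ differ only by $O((b_0-a_0)^{3/2})$, which is negligible compared to the gain $c(b_0-a_0)$ we are extracting, so the dichotomy for $u^1$ transfers to $|U|$ and vice versa without the error spoiling either the new lower trapping by $u^1$ or the new upper trapping by $|U|$. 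Once the dichotomy step is set up correctly, iterating it along a Harnack chain from $\bar x$ to the free boundary and choosing $\bar\eps$ small enough that all barriers fit inside $B_1$ completes the proof, with $c$ determined by $c_0$ and the barrier normal derivatives.
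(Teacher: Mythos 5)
Your strategy matches the paper's: rescale, place $\bar x = \frac15 e_n$ in the positivity set, run the interior Harnack dichotomy on $u^1$, propagate the gain across an annulus with a De Silva--type radial barrier, use Definition~\ref{solutionnew} to exclude contact on $F(U)$, and exploit the $(b_0-a_0)^{3/4}$-smallness of $u^2,\dots,u^m$ to transfer the estimate between $u^1$ and $|U|$. This is essentially the paper's proof (Lemma~\ref{fbharnack} plus the rescaling in the proof of the theorem). Two points need correction.

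First, your justification of the free boundary condition in the lower-bound case is misattributed. You claim $u^1$ satisfies the supersolution condition ``because $u^1 \le |U|$ and $|U|$ is a subsolution.'' This conflates the two halves of Definition~\ref{solutionnew}: the subsolution condition (ii) concerns touching $|U|$ from \emph{above} by subunit-gradient test functions and plays no role in the lower barrier argument. What is actually invoked is condition (i), which is formulated directly for the scalar projections $\langle U,f\rangle$ and hence applies verbatim to $u^1 = \langle U, f^1\rangle$; this is exactly why the paper enlarges the viscosity class from the condition (i') of \cite{MTV2} to (i). (One \emph{could} salvage an argument using only (i') for $|U|$, by noting that at a touching point $\tilde x\in F(U)$ one has $u^1(\tilde x)=|U|(\tilde x)=0$, so the lower barrier also touches $|U|$ there; but that is a different observation from the one you wrote, and it does not go through ``$|U|$ is a subsolution.'') A small related slip: the barrier is not harmonic in the annulus; it is a radial power $|x-\bar x|^\gamma$, $\gamma<0$, chosen \emph{strictly} subharmonic (resp.\ superharmonic) precisely so that interior contact with the harmonic $u^1$ (resp.\ subharmonic $|U|$) is ruled out.

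Second, your outline only covers the regime where $\sigma=a_0/r$ is small, i.e.\ the content of Lemma~\ref{fbharnack}, which assumes $|\sigma|<1/10$. The theorem imposes no such restriction on $a_0$. After rescaling, the paper splits into three cases: $|a_0|\le r/10$ (Lemma~\ref{fbharnack}); $a_0>r/10$, where $B_{1/10}$ lies in the positivity set and ordinary interior Harnack on the harmonic $u^1$ suffices, with the $u^i$-smallness again converting the $u^1$-estimate into one for $|U|$; and $a_0<-r/10$, where the upper bound $(x_n+b_0)^+$ forces $|U|\equiv 0$ on $B_{1/20}$ and the conclusion holds trivially. Your phrase ``choosing $\bar\eps$ small enough that all barriers fit inside $B_1$'' does not cover these two extra regimes, and they must be handled separately.
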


 We briefly postpone the proof of Theorem \ref{holder}, and obtain the key corollary which will be used in the improvement of flatness argument.
First, the following lemma allows to translate the flatness assumption
on the vector-valued function $U$ into the property that one of its components is trapped between nearby translation of a one-plane solution, while the remaining ones are small.

\begin{lem}\label{change}Let $U$ be a solution to  \eqref{VOP} in $B_1$ such that for $\eps>0$
\be\label{flat_harnack1} |U- f^1 x_n^+| \leq \eps, \quad \text{in $B_1$,}
\ee
and
\be\label{non_d11} |U| \equiv 0 \quad \text{in $B_1 \cap \{x_n < - \eps\}$}. \ee Then
\begin{enumerate}
\item For $i=2,\ldots,m, $ \be |u^i| \leq C \eps(x_n+\eps)^+ \quad \text{in $B_{3/4};$}\ee
\item \be (x_n-\eps) \leq u^1 \leq |U| \leq (x_n+2\eps)^+ \quad \text{in $B_1$.}\ee
\end{enumerate}
\end{lem}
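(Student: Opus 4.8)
The plan is to prove both estimates from the two hypotheses \eqref{flat_harnack1} and \eqref{non_d11}, exploiting that each component $u^i$ is harmonic in $\Omega(U) = B_1 \cap \{|U|>0\}$ and that \eqref{non_d11} pins the positivity set to lie (essentially) in the upper half-space.

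For part (2), the upper bound $|U| \le (x_n + 2\eps)^+$ is almost immediate: by \eqref{non_d11}, $|U| \equiv 0$ where $x_n < -\eps$, so the bound is trivial there; where $x_n \ge -\eps$, we have $(x_n + 2\eps)^+ \ge \eps \ge |U - f^1 x_n^+| \ge |U| - x_n^+ \ge |U| - (x_n + 2\eps)^+$... more carefully, from \eqref{flat_harnack1}, $|U| \le |f^1 x_n^+| + \eps = x_n^+ + \eps \le (x_n + 2\eps)^+$ everywhere (the last inequality holds since if $x_n \ge 0$ then $x_n + \eps \le x_n + 2\eps$, and if $-\eps \le x_n < 0$ then $x_n^+ + \eps = \eps \le x_n + 2\eps$). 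For the lower bound on $u^1$: from \eqref{flat_harnack1}, $u^1 \ge x_n^+ - \eps \ge x_n - \eps$ when $x_n \ge 0$; when $x_n < 0$, $u^1 \ge -\eps \ge x_n - \eps$. So in fact $u^1 \ge x_n - \eps$ everywhere in $B_1$, and $u^1 \le |U|$ is automatic. This gives part (2) on all of $B_1$.

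The substance is part (1): the improved bound $|u^i| \le C\eps (x_n+\eps)^+$ for $i \ge 2$ in $B_{3/4}$, which upgrades the crude bound $|u^i| \le \eps$ from \eqref{flat_harnack1} to something that vanishes linearly at the bottom of the flat strip. The idea is a barrier/boundary-Harnack-type argument. The function $u^i$ is harmonic in $\Omega(U)$, satisfies $|u^i| \le \eps$ in $B_1$, and vanishes on $F(U) \subset \{-\eps \le x_n \le \eps\}$ together with outside $\Omega(U)$. I would compare $|u^i|$ (or $u^i$ and $-u^i$ separately) with a multiple of a positive harmonic function in $\Omega(U) \cap B_{7/8}$ that controls the linear growth from the free boundary — for instance, using that $u^1$ itself is comparable to $(x_n+\eps)^+$ from below and above in the relevant region (by part (2) and \eqref{non_d11}), one can try the comparison $|u^i| \le C\eps\, u^1$ in a suitable subdomain via the maximum principle, after checking the inequality on the boundary: on $F(U)$ both sides vanish; on $\partial B_{7/8} \cap \{x_n > \eps\}$ one uses $u^1 \gtrsim x_n - \eps \gtrsim$ const and $|u^i| \le \eps$; and in the thin strip near the free boundary one uses interior gradient estimates for the harmonic function $u^i$ (which vanishes on a large portion of the boundary) to get the linear decay. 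Combining with $u^1 \le (x_n + 2\eps)^+$ then yields $|u^i| \le C\eps(x_n + \eps)^+$.

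The main obstacle I anticipate is handling the region close to the free boundary where $\Omega(U)$ may be geometrically irregular: one cannot directly apply a clean boundary Harnack inequality because $F(U)$ is not known to be smooth at this stage, only flat. The resolution should be that flatness is exactly enough — $\Omega(U)$ contains the half-space slab $B_{7/8} \cap \{x_n > \eps\}$ (by \eqref{flat_harnack1}, $|U| \ge u^1 \ge x_n - \eps > 0$ there) and is contained in $\{x_n > -\eps\}$ (by \eqref{non_d11}), so it is trapped between two parallel hyperplanes at distance $2\eps$. For a harmonic function in such a flat domain vanishing on the lower boundary portion and bounded by $\eps$, one gets the linear decay estimate by a rescaled barrier (e.g. comparison with the harmonic function in the exact slab, or a direct sub/supersolution like $C\eps(x_n + \eps) - c|x'|^2$-type corrections), with the constant $C$ depending only on $n$. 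I would carry this out by: (i) establishing the slab containment, (ii) constructing the explicit barrier $w = C\eps(x_n+\eps)$ possibly with a quadratic correction to beat the Laplacian/boundary on the curved part, (iii) applying the comparison principle in $\Omega(U) \cap B_{7/8}$ to get $\pm u^i \le w$, and (iv) restricting to $B_{3/4}$ to absorb the quadratic correction into the constant. This is the step that requires the most care, though all the tools are standard once the slab geometry is in hand.
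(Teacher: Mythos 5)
Your proposal follows the paper's proof in essence: part (2) is the same elementary verification (which the paper dismisses as ``an immediate consequence of the assumptions''), and for part (1) you correctly isolate the two key points---the slab containment $\Omega(U) \subset \{x_n > -\eps\}$ coming from \eqref{non_d11}, and a maximum-principle comparison of the subharmonic function $|u^i|$ against a barrier vanishing on $\{x_n=-\eps\}$. The paper's execution is a touch cleaner than your sketch in two respects. First, it observes that $|u^i|$, extended by zero, is subharmonic on the \emph{entire} half-ball $B_1 \cap \{x_n>-\eps\}$, so the comparison runs in a smooth fixed domain rather than in the rough $\Omega(U)\cap B_{7/8}$ (both work, since on $F(U)$ the barrier is nonnegative and $|u^i|=0$, but the extension avoids having to argue about the free boundary at all). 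Second, for the barrier it takes $\eps v$, with $v$ the harmonic function in $B_1\cap\{x_n>-\eps\}$ having smooth boundary data equal to $0$ on $B_{1-\eps}\cap\{x_n=-\eps\}$ and $1$ on $\partial B_1\cap\{x_n>-\eps\}$, and then invokes boundary regularity for harmonic functions to get $v\le C(x_n+\eps)$ on the smaller ball; this sidesteps any need to cook up an explicit supersolution. Note that the explicit correction you floated, $w=C\eps(x_n+\eps)-c|x'|^2$, actually goes the wrong way: the negative quadratic term makes $w$ \emph{smaller} near $\partial B_{7/8}\cap\{x_n\approx-\eps\}$, which is precisely where you need $w\ge\eps$ to dominate $|u^i|$. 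Your other floated option---comparison with the harmonic function in the slab---is exactly what the paper does and closes the argument without corrections.
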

\begin{proof} The bounds in $(ii)$ are an immediate consequence of the assumptions. For $(i), $let $v$ be the harmonic function in $B_1 \cap \{x_n >-\eps\}$ with smooth boundary data $\bar v$, $0 \leq \bar v \leq 1$ such that
$$
\begin{cases}
\bar v = 0 &\text{on $B_{1-\eps} \cap \{x_n=-\eps\}$};\\
\bar v=1  &\text{on $\p B_1 \cap \{x_n >-\eps\}.$}
\end{cases}
$$
Since $|u^i|$ is subharmonic and by \eqref{flat_harnack1}-\eqref{non_d11} $$|u^i| \leq \eps, \quad u^i\equiv 0 \quad \text{on $\{x_n =-\eps\}$},$$ by comparison and boundary regularity we get
$$|u^i| \leq \eps v \leq C \eps (x_n+\eps) \quad \text{in $B_{1/2} \cap \{x_n >-\eps\}$.}$$
\end{proof}

Now denote by,
$$\tilde u^1:= \frac{u^1-x_{n}}{\eps}, \quad \widetilde{|U|} : =\frac{|U|-x_n}{\eps}, \quad x \in B_1(U) \cup F(U).$$ The following corollary is a consequence of the results above.

\begin{cor}\label{AA} Let $U$ be a solution to  \eqref{VOP} in $B_1$ such that for $\eps>0$
\be\label{flat*} |U- f^1 x_n^+| \leq \eps, \quad \text{in $B_1$,}
\ee
and
\be\label{non_d*} |U| \equiv 0 \quad \text{in $B_1 \cap \{x_n < - \eps\}$}. \ee There exists $\bar \eps>0$ small universal, such that if $\eps \leq \bar \eps$, then $\tilde u^1$ and $\widetilde{|U|}$ have a universal H\"older modulus of continuity at $x_0 \in B_{1/2}$ outside a ball of radius $r_\eps,$ with $r_\eps \to 0$ as $\eps \to 0.$
\end{cor}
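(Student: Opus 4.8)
The plan is to deduce Corollary \ref{AA} from the Harnack-type Theorem \ref{holder} by the standard iteration scheme that converts a single improvement step into a uniform Hölder modulus of continuity, exactly as in the scalar theory of \cite{D}. First I would use Lemma \ref{change} to put the hypotheses of the corollary into the form required by Theorem \ref{holder}: the flatness \eqref{flat*} together with the nondegeneracy \eqref{non_d*} gives, after possibly shrinking to $B_{3/4}$, the two-sided bound $x_n - \eps \le u^1 \le |U| \le (x_n + 2\eps)^+$ and the smallness $|u^i| \le C\eps (x_n+\eps)^+ \le C\eps$ for $i \ge 2$. So with $a_0 = -\eps$, $b_0 = 2\eps$ we have $b_0 - a_0 = 3\eps \le \bar\eps$ on $B_{3/4}$, and the components $u^i$ are controlled; note $C\eps \le r(b_0-a_0)/r)^{3/4}$-type bounds hold trivially once $\eps$ is small since $\eps = o(\eps^{3/4})$.

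The core of the argument is the iteration. Fix $x_0 \in B_{1/2}$. As long as we are at a scale $r_k = 20^{-k} \cdot (1/8)$ (say) on which the oscillation parameter $\omega_k := b_k - a_k$ satisfies $\omega_k/r_k \le \bar\eps$ and the smallness condition $|u^i| \le r_k (\omega_k/r_k)^{3/4}$ on the relevant ball holds, Theorem \ref{holder} applies and yields new constants $a_{k+1} \le b_{k+1}$ with $a_k \le a_{k+1} \le b_{k+1} \le b_k$ and $\omega_{k+1} = (1-c)\omega_k$ on $B_{r_k/20}(x_0) = B_{r_{k+1}}(x_0)$. Thus $\omega_k = (1-c)^k \omega_0$ decays geometrically while $r_k = 20^{-k} r_0$ also decays geometrically; choosing the relation between $c$ and the contraction factor $20$ gives $\omega_k / r_k = (20(1-c))^k \, \omega_0/r_0$, which stays bounded (indeed decays, since $1-c$ can be taken close enough to $1$ that $20(1-c)>1$, or one simply keeps $\omega_k/r_k \le \omega_0/r_0 \le \bar\eps$ as long as the iteration runs; the precise bookkeeping of the geometric factors is the routine part). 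The iteration can be continued as long as both structural conditions hold; the smallness of the $u^i$, $i\ge 2$, must be re-verified at each scale — here one rescales $u^i$ by $\eps$ and uses that $|u^i| \le C\eps(x_n+\eps)^+$ gives an extra gain in $x_n$, so that on $B_{r_k}(x_0)$ one has $|u^i| \le C\eps r_k$, which is $\le r_k(\omega_k/r_k)^{3/4}$ precisely because $\omega_k/r_k \gtrsim \eps^{?}$ only fails when $r_k$ has shrunk below a threshold $r_\eps$. This is what produces the excluded ball $B_{r_\eps}(x_0)$ with $r_\eps \to 0$ as $\eps \to 0$: once $r_k \approx r_\eps$ the ratio $\omega_k / r_k$ is no longer controlled by the smallness of the $u^i$-terms, and we stop.

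From the geometric decay of $\omega_k$ at the dyadic-type scales $r_k$ one reads off, by the standard interpolation across intermediate radii, that $\tilde u^1 = (u^1 - x_n)/\eps$ and $\widetilde{|U|} = (|U|-x_n)/\eps$ oscillate by at most $C (r/r_0)^{\alpha}$ on $B_r(x_0)$ for all $r_\eps \le r \le r_0$, where $\alpha = \log(1-c)^{-1}/\log 20 \in (0,1)$ is universal; this is precisely a universal Hölder modulus of continuity at $x_0$ outside the ball of radius $r_\eps$. Since this holds for every $x_0 \in B_{1/2}$ with the same $\alpha$ and the same $r_\eps$ (depending only on $\eps$ and universal constants, with $r_\eps \to 0$), the corollary follows.

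The main obstacle is the careful propagation of the auxiliary smallness hypothesis $|u^i| \le r(\omega/r)^{3/4}$ on the components $u^i$, $i \ge 2$, through the iteration: unlike in the scalar case, Theorem \ref{holder} only returns information about $u^1$ and $|U|$, not about the individual transverse components, so one must separately track the $u^i$ using their subharmonicity together with the improved bound $|u^i| \le C\eps(x_n+\eps)^+$ from Lemma \ref{change}. Quantifying exactly when this bound degrades relative to the shrinking window $\omega_k$ is what pins down $r_\eps$, and getting the exponents ($3/4$ versus the geometric rates) to line up is the delicate bookkeeping; everything else is the by-now-standard Harnack-to-Hölder machinery.
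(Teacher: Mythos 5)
Your proposal follows the paper's argument: apply Lemma \ref{change} to normalize the data, iterate Theorem \ref{holder} at scales $r_k=r\,20^{-k}$ for as long as both the flatness ratio $(b_k-a_k)/r_k\leq\bar\eps$ and the transverse-component smallness $|u^i|\leq r_k((b_k-a_k)/r_k)^{3/4}$ remain verifiable, and read off the universal H\"older modulus from the geometric decay of $b_k-a_k$. One slip worth flagging: you assert that $\omega_k/r_k=(20(1-c))^k\,\omega_0/r_0$ ``stays bounded (indeed decays)'', but since $20(1-c)>1$ this ratio actually \emph{grows}, and this growth together with the $|u^i|$-smallness constraint (the latter being the binding one, as you correctly surmise, giving $\eps\leq \bar C\big((1-c)^3/20\big)^k$) is exactly what terminates the iteration at scale $r_\eps$; your subsequent description of the stopping criterion is right, so the argument still closes as in the paper.
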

\begin{proof} In view of Lemma \ref{change}, $u^1, U$ satisfy the assumptions of Theorem \ref{holder} (for  $\bar \eps$ possibly smaller than the one in Theorem \ref{holder}), with $$a_0 = -\eps, \quad b_0= 2\eps, \quad r= 1/4, \quad x_0 \in B_{1/2}.$$ Hence,
\be x_n +a_k \leq u^1 \leq |U| \leq (x_n+b_k)^+ \quad \text{in $B_{r\rho_k}(x_0)$}, \quad \rho_k=20^{-k},
\ee
with $$b_k - a_k =(1-c)^k(b_0-a_0),$$ for $0<c<1$ universal
as long as,
$$20^k (1-c)^k \eps \leq \bar \eps, \quad \eps \leq \bar C \left(\frac{(1-c)^{3}}{20}\right)^k,$$ %c'era k
with $\bar C$ universal. This implies that for such cases, in $(\Omega(U)\cup F(U))\cap B_{r\rho_k}(x_0)$ the oscillation of the functions $\tilde u^1$ and $\widetilde{|U|}$ are less or equal than $(1 - c)^k = 20^{-\alpha k} = \rho^\alpha$, as we claimed.
\end{proof}

The next lemma is the main ingredient in the proof of Theorem \ref{holder}. It uses the observation that $|U|$ is subharmonic in $\Omega(U)$, as it can be easily verified with a straightforward computation.

\begin{lem}\label{fbharnack} Let $U$ be a solution to  \eqref{VOP} in $B_1$ such that for $\eps>0$
\be\label{flat_harnack2} p(x) \leq u^1 \leq |U| \leq (p(x) + \eps)^+ \quad \text{in $B_1$,} \quad p(x):= x_n + \sigma, |\sigma| <1/10,
\ee and
\be\label{uismall}|u^i|\leq \eps^{3/4} \quad \text{in $B_{1/2}$}, \quad i=2,\ldots, m,\ee
with $C>0$ universal.
There exists $\bar \eps>0$, such that if $0 < \eps \leq \bar \eps$, then at least one of the following holds true:
\be\label{flat_harnack3} p(x) +c\eps \leq u^1 \leq |U| \quad \text{in $B_{1/2}$,}
\ee
or
$$u^1 \leq |U| \leq (p(x) + (1-c)\eps)^+,  \quad \text{in $B_{1/2}$,}
$$
for $0<c<0$ small universal.
\end{lem}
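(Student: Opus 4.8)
The plan is to exploit that $w := |U|$ is a nonnegative subharmonic function in $\Omega(U)$ trapped between $p$ and $(p+\eps)^+$, and to run a standard dichotomy argument based on where the ``extra room'' sits. First I would fix an interior point, say $\bar x = \tfrac{1}{10}e_n$, and look at the normalized quantity $(w(x)-p(x))/\eps$, which lies in $[0,1]$ on the set $\{p>0\}\cap B_1$. Since $\Delta w \ge 0$ in $\Omega(U)$ and $w-p$ is caloric-type nonnegative, at $\bar x$ either $w(\bar x) - p(\bar x) \ge \tfrac12 \eps$ or $w(\bar x) - p(\bar x) \le \tfrac12 \eps$; these two cases will produce, respectively, the lower improvement \eqref{flat_harnack3} and the upper improvement. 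The dichotomy is measured at one fixed interior point because Harnack for the subharmonic/superharmonic comparison functions propagates the gain to all of $B_{1/2}$.

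In the first case ($w$ detached from $p$ from above at $\bar x$), I would compare $w - p$ from below with a positive multiple of a fixed harmonic function. Concretely, let $h$ be harmonic in $B_{3/4}\cap\{p>0\}$ with $h = 0$ on $\{p=0\}$ and $h=1$ on the spherical part; interior Harnack / Hopf gives $h \ge c_0 (x_n+\sigma)$ near the bulk and in particular $h(\bar x) \ge c_0$. Since $u^1 - p$ is superharmonic-from-below — more precisely $u^1$ is harmonic in $\Omega(U)$ and $u^1 \ge p$ with $u^1 = 0 \ge p$ impossible on the contact set, so one argues on $w$ first and then transfers to $u^1$ using $|u^i|\le \eps^{3/4}$, hence $u^1 = \sqrt{w^2 - \sum_{i\ge2}(u^i)^2} \ge w - C\eps^{3/2}/\dots$, i.e. $u^1 \ge w - C\eps^{1/2}(\text{something}) $ — comparison yields $w - p \ge c_1\eps\, h \ge c\eps$ on $B_{1/2}$, and then the same bound for $u^1$ up to an error $O(\eps^{3/2})$ absorbed into the constant, giving \eqref{flat_harnack3}. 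The key technical point is that $u^1$ cannot itself be used directly with the free boundary condition since only $|U|$ is a viscosity subsolution; this is exactly why the hypothesis \eqref{uismall} on the other components enters, to pass from the scalar $w$ to $u^1$ with a negligible error.

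In the second case ($w(\bar x) \le p(\bar x) + \tfrac12\eps$), I would compare $(p+\eps)^+ - w$ from below with a positive multiple of $h$ on $B_{3/4}\cap\{p>0\}$; here I use that $w$ is subharmonic so $(p+\eps) - w$ is superharmonic in $\Omega(U)\cap\{p>0\}$, it is $\ge 0$ by hypothesis, and it is $\ge \tfrac12\eps$ at $\bar x$, so by Harnack $(p+\eps) - w \ge c_1\eps h \ge c\eps$ on $B_{1/2}\cap\{p>0\}$, i.e. $w \le (p + (1-c)\eps)^+$ there; on $\{p\le 0\}\cap B_{1/2}$ the trapping $w \le (p+\eps)^+ \le \eps$ combined with the strip where $p+\eps \le 0$ (where $w\equiv 0$) handles the remaining region, after possibly shrinking the domain slightly and using that the free boundary stays in an $\eps$-neighborhood of $\{p=0\}$. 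The step I expect to be the main obstacle is making the ``detachment at an interior point'' argument fully rigorous near the free boundary: one must control the geometry of $\Omega(U)\cap\{p>0\}$ (it contains a definite interior ``good'' region of $B_{3/4}$ once $\eps$ is small, by \eqref{flat_harnack2}), construct the comparison function $h$ on that possibly irregular domain, and justify the Hopf-type lower bound $h \gtrsim \mathrm{dist}(\cdot,\{p=0\})$ — i.e.\ showing the boundary is Lipschitz/flat enough at scale $1$ — as well as carefully tracking that the $O(\eps^{3/4})$ errors from the other components never compete with the $O(\eps)$ gain, which forces the exponent $3/4 > 1/2$ in \eqref{uismall}.
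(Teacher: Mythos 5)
Your overall skeleton (dichotomy at a fixed interior point, then propagate by comparison, with the $\eps^{3/4}$ hypothesis absorbed as an $o(\eps)$ error when passing between $|U|$ and $u^1$) is the right shape, and the paper also dichotomizes at a fixed point and runs two comparison arguments. But the heart of the lemma is missing from your plan, and the specific comparison you propose does not close.

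The static comparison on $B_{3/4}\cap\{p>0\}$ cannot work as stated: to conclude $w-p\ge c_1\eps\,h$ you would need $w-p\ge c_1\eps$ on the spherical part of $\partial(B_{3/4}\cap\{p>0\})$, but the hypothesis only gives $w-p\ge0$ there. And even if you repair this by first propagating the interior detachment $w(\bar x)-p(\bar x)\ge\eps/2$ to a small ball $\overline{B}_\rho(\bar x)$ via interior Harnack and then running the comparison on an annulus-type domain, the estimate $w-p\ge c_1\eps\,h$ degenerates as you approach $\{p=0\}$ (since $h$ vanishes there), so it never gives $u^1\ge p+c\eps$ uniformly in $B_{1/2}$. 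More importantly, in both cases the conclusion of the lemma is a genuine statement about where $F(U)$ sits: in Case 1 it forces $F(U)\cap B_{1/2}\subset\{p\le -c\eps\}$, and in Case 2 it forces $|U|\equiv0$ on $\{-\eps<p\le -(1-c)\eps\}\cap B_{1/2}$. No comparison restricted to $\{p>0\}$ can see this, and no argument that never invokes the free boundary condition can prove it — the lemma would simply be false for generic subharmonic $|U|$ trapped in the slab without the condition $|\nabla|U||=1$ on $F(U)$.

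What the paper does instead, and what your plan omits, is to build a \emph{sliding family} of explicit barriers $v_t=p\pm c_0\eps(w-1)\pm t$ (here $w$ is the radially symmetric function $\sim|x-\bar x|^{\gamma}$, $\gamma<0$, on an annulus, not $|U|$), and to run a first-touching argument. The barriers are chosen so that $|\nabla v_t|>1$ on $F(v_t)$ in Case 1 (resp. $<1$ in Case 2), which, by Definition \ref{solutionnew}(i) (resp. (ii)), forbids the touching point from lying on $F(U)$; strict sub/superharmonicity forbids touching in $\Omega(U)$; and the interior Harnack bound at $\bar x$ forbids touching on the inner sphere. This is exactly the mechanism that pushes the free boundary, and it is where the viscosity structure enters. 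Until you incorporate a barrier whose gradient on its own zero set is quantitatively above (or below) $1$ together with the viscosity free boundary condition, the plan has a fatal gap in the strip $\{-\eps<p\le0\}$.

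One smaller remark: the paper dichotomizes on $u^1(\bar x)$ rather than $|U|(\bar x)$; the two agree up to $O(\eps^{3/2})$ by \eqref{uismall}, but dichotomizing on $u^1$ is cleaner because in Case 1 the interior Harnack inequality is applied directly to the harmonic function $u^1-p\ge0$, whereas $|U|-p$ is only subharmonic.
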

\begin{proof}
We distinguish two cases. If at $\bar x = \frac 1 5 e_n$
\be\label{bottom}u^1(\bar x) \leq p(\bar x) + \frac{\eps}{2}, \ee
then we will show that
\be\label{double}|U| \leq (p(x) + (1-c) \eps)^+ \quad \text{in $B_{1/2}$ }.\ee
Similarly, if
\be \label{secondcase} u^1(\bar x) \geq p(\bar x)+ \frac{\eps}{2},\ee we will show that $$u^1 \geq  p(x)+ c \eps \quad \text{in $B_{1/2}$}.$$
In either case, we let $A = B_{3/4}(\bar{x})\setminus B_{1/20}(\bar{x})$ and
\be\label{w}
w=
\begin{cases}
1 & \text{in $B_{1/20}(\bar{x})$};\\
\ddfrac{\abs{x-\bar{x}}^{\gamma} - (3/4)^{\gamma}}{(1/20)^{\gamma} - (3/4)^{\gamma}} &\text{in $A$};\\
0 &\text{on $\partial B_{3/4}(\bar{x})$}.
\end{cases}
\ee
with $\gamma <0$ be such that $\Delta w >0$ in $A$.\\

{\it Case 1.} If $u^1(\bar x) \geq p(\bar x) + \frac{\eps}{2},$ the argument in \cite[Lemma 3.3]{D} carries on, even if $u^1$ may change sign. For completeness, we provide the details.

Since $\abs{\sigma}<1/10$ and  by the flatness assumption
\be\label{flat} u^1 - p\geq 0 \quad \text{in $B_1$},\ee
we immediately deduce that $B_{1/10}(\bar x) \subset B_1^+(U)$. Notice that, by definition of $\bar{x}$, we have
\be\label{inclus}
B_{1/2} \subset\subset B_{3/4}(\overline{x}) \subset \subset B_1.
\ee
Hence, in view of \eqref{flat}, by Harnack inequality applied to $u^1-p$ we get for $c_0>0$ universal
\be\label{first_step}
u^1(x) -p(x) \geq c(u^1(\overline{x}) - p(\overline{x})) \geq c_0 \eps \quad \text{in $B_{1/20}(\bar x)$},
\ee
where in the second inequality we used assumption \eqref{secondcase}.\\
Now, let us set
\be\label{def.sub}
v_t(x) = p(x) + c_0 \eps (w(x)-1) +t \quad \text{in $\overline{B}_{3/4}(\bar{x})$}
\ee
for $t\geq 0$. Thus, we deduce that $\Delta v_t = \Delta p + c_0\eps \Delta w >0$ on $A$ and, by \eqref{flat}, we get $$v_0\leq p \leq u^1 \quad\text{in $\overline{B}_{3/4}(\overline{x})$}.$$
Thus, let $\overline{t}>0$ be the largest $t>0$ such that $v_t\leq u^1$ in $\overline{B}_{3/4}(\overline{x})$. We want to show that $\overline{t}\geq c_0\eps$. Indeed, by the definition of $v_t$, we will get
$$
u^1 \geq v_{\overline{t}} \geq p + c_0 \eps w \quad\text{in $\overline{B}_{3/4}(\overline{x})$}.
$$
In particular, by \eqref{inclus}, since $w \geq c_1$ on $\overline{B_{1/2}}$, we get
$$
u^1 \geq p + c\eps  \quad\text{in $\overline{B}_{1/2}$},
$$
as we claimed.

 Suppose by contradiction that $\bar{t} < c_0 \eps$. Let $\tilde{x} \in \overline{B}_{3/4}(\bar{x})$ be the touching point between $v_{\overline{t}}$ and $u$, i.e.
$$
u(\tilde{x}) = v_{\overline{t}}(\tilde{x}),
$$
we want to prove that it can only occur on $\overline{B}_{1/20}(\overline{x})$. Since $w \equiv 0$ on $\partial B_{3/4}(\overline{x})$ and $\overline{t}< c_0\eps$ we get
$$
v_{\overline{t}} = p - c_0\eps +\overline{t} < u \quad \text{on $\partial B_{3/4}(\overline{x})$},
$$
thus we left to exclude that $\tilde{x}$ belongs to the annulus $A$. By the definition \eqref{def.sub}, we get
\be
\abs{\nabla v_{\overline{t}}} \geq \abs{v_n} \geq \abs{1+c_0\eps w_n} \quad \text{in $A$}.
\ee
Since $w$ is radially symmetric $w_n = \abs{\nabla w} \nu_x \cdot \mathrm{e}_n$ in $A$, where $\nu_x$ is the unit direction of $x-\overline{x}$. On one side, from the definition of $w$, we get that $\abs{\nabla w} >c $ on $A$ and on the other $\nu_x \cdot \mathrm{e}_n$ is bounded by below in the region $\{v_{\overline{t}} \leq 0\}\cap A$, since $\overline{x}_n = 1/5$ and for $\eps$ small,
$$
\{v_{\overline{t}}\leq 0\} \cap A \subset \{p - c_0\eps \leq 0\} \cap A = \{x_n \leq - \sigma + c_0\eps \}\cap A \subset \{x_n < 3/20 \}.
$$
Hence, we infer that $\abs{\nabla v_{\overline{t}}} \geq 1+ c_2(\gamma)\eps$ in $\{v_{\overline{t}}\leq 0 \}\cap A$
and consequently
\be\label{>1}
  \abs{\nabla v_{\overline{t}}} >1 \quad\text{on $ F(v_{\overline{t}})\cap A$.}
\ee
Finally, since we observed that $\Delta v_{\overline{t}} > \eps^2$ in $A$, and $v_{\bar t } \leq u^1$, we deduce that the touching cannot occur in $A \cap B_1^+(U)$ where $u^1$ is harmonic. In view of \eqref{>1} and Definition \ref{solutionnew}, we conclude that the touching cannot occur on $A \cap F(U)$ as well.
 Therefore $\tilde{x} \in \overline{B}_{1/20}(\overline{x})$ and
$$
u^1(\tilde{x}) = v_{\overline{t}}(\tilde{x}) = p(\tilde{x})  + \overline{t} \leq p(\tilde{x}) + c_0 \eps,
$$
in contradiction with \eqref{first_step}.\\

{\it Case 2.} If $u^1(\bar x) \leq p(\bar x) + \frac{\eps}{2},$ by the lower bound in \eqref{flat_harnack2}, $|u^1|=u^1$ in $B_{1/10}(\bar x) \subset B_1^+(U)$.
Thus by Harnack inequality and assumption \eqref{bottom}
\be \label{c_0}
p +\eps - |u^1| \geq  2c_0 \eps \quad \text{in $B_{1/20}(\bar x)$.}
\ee

Since the desired bound clearly holds in $\{p \leq -\eps \}$, where all the $u^i \equiv 0,$ it is enough to restrict to the region $\{p> -\eps\}$. Below, the superscript $\eps$ denotes such restriction.

Now, let us consider for $t\geq 0$
\be\label{def.sub2}
v_t(x) = p(x)+\eps - c_0 \eps (w(x)-1) -t \quad \text{in $\overline{B^\eps}_{3/4}(\bar{x})$.}
\ee
Thus, we have $\Delta v_t = -c_0 \eps \Delta w< 0$ on $A^\eps$ and
$$
v_0 \geq p +\eps \geq \abs{U} \quad \text{in $\overline{B^\eps}_{3/4}(\overline{x})$}.
$$
Now, let $\bar{t}>0$ be the largest $t>0$ such that $\abs{U}\leq v_t$ in $\overline{B^\eps}_{3/4}(\overline{x})$.
We want to show that $\overline{t}\geq c_0\eps$. Indeed, by the definition of $v_t$, this would give
$$
\abs{U} \leq v_{\bar{t}} \leq p +\eps - c_0 \eps w \quad\text{in $\overline{B^\eps}_{3/4}(\overline{x})$}.
$$
In particular, by \eqref{inclus}, since $w \geq c_1$ on $\overline{B}_{1/2 }$, we get
$$
\abs{U} \leq p+ (1-c)\eps  \quad\text{in $\overline{B^\eps}_{1/2}$},
$$
as we claimed.

We are left with the proof that $\overline{t}\geq c_0\eps$. Suppose by contradiction that $\bar{t} < c_0 \eps$. Let $\tilde{x} \in \overline{B^\eps}_{3/4}(\bar{x})$ be the first touching point between $v_{\bar{t}}$ and $\abs{U}$ in $\overline{B^\eps}_{3/4}(\bar{x})$, i.e.
$$
\abs{U}(\tilde{x}) = v_{\bar{t}}(\tilde{x}).
$$
We prove that such touching point  can only occur on $\overline{B}_{1/20}(\overline{x})$. Since $w \equiv 0$ on $\partial B_{3/4}(\overline{x})$, $|U| \equiv 0$ on $\{p=-\eps\}$ and $\bar{t}< c_0\eps$ we get
$$
v_{\bar{t}} %= p +\eps + c_0\eps -\overline{t} > p + \eps
> \abs{U} \quad \text{on $\partial B^\eps_{3/4}(\overline{x})$},
$$
thus we need to exclude that $\tilde{x}$ belongs to $A^\eps$. By the definition \eqref{def.sub}, we get
\be
\abs{\nabla v_{\bar{t}}}^2 = 1 - 2c_0 \eps w_n + O(\eps^2) \quad \text{in $A^\eps$}.
\ee On the other hand, it easily follows from the definition \eqref{def.sub2} that $$\{v_{\bar t}=0\} \subset \{ p +(1-c_0)\eps <0\}\subset \left\{x_n <\frac{1}{10} - (1-c_0)\eps\right\},$$ thus we can estimate that
$$w_n \geq c_3(\gamma)>0 \quad \text{on $A^\eps \cap \{v_{\bar t}=0\}.$}$$

 Hence, we infer that for $\eps$ small, \be\label{<1}
 0 \neq  \abs{\nabla v_{\bar{t}}} <1 \quad\text{on $ F(v_{\bar{t}})\cap A^\eps$.}
\ee
Finally, since we observed that $\Delta v_{\bar{t}}< 0$ in $A^\eps$, and $v_{\bar t} \geq |U|$, we deduce that $\tilde x \not \in A^\eps \cap B^1(U)$. Moreover, by \eqref{<1} and Definition \ref{solutionnew}, we also conclude that $\tilde x \not \in A^\eps \cap F(U).$

Therefore, $\tilde x \in \overline{B}_{1/20}(\bar x)$ and
$$|U|(\tilde x)= v_{\bar t}(\tilde x)= p(\tilde x) + \eps - \bar t > p(\tilde x) +\eps - c_0\eps,$$
that is
$$p(\tilde x) +\eps- |U|(\tilde x) < c_0 \eps.$$ This implies, using \eqref{uismall} and the fact that $|u^1|$ is bounded,
$$p(\tilde x) +\eps - |u^1|(\tilde x) - C\eps^{3/2} < c_0 \eps,$$
and we contradict \eqref{c_0}, for $\eps$ small and $C$ universal constant.
\end{proof}

We are now ready to prove Theorem \ref{holder}.
\begin{proof} Let us rescale,
$$u^i_r(x):=\frac{1}{r} u^i(r x+x_0), \quad x \in B_1, \quad \quad i=1, \ldots, m.$$ Then,
\be p(x) \leq u_r^{1} \leq |U_r| \leq (p(x)+\eps)^+ \quad \text{in $B_{1}$,}
\ee with
$$\eps:= r^{-1}(b_0 -a_0) \leq \bar \eps, \quad p(x)=x_n +\sigma, \quad \sigma = r^{-1}a_0$$
and
\be\label{trequarti}|u^i_r| \leq \eps^{3/4} \quad \text{in $B_{1/2}$}.\ee
If $$|a_0| \leq \frac{r}{10},$$ then we can apply Lemma \ref{fbharnack} and reach the desired conclusion. If $a_0 < -r/10$ then for $\eps$ small,
$$|U_r| \equiv 0 \quad \text{in $B_{1/20}$},$$ and again we obtain the claim. We are left with the case $a_0 > r/10.$ Then $B_{1/10} \subset B_1^+(U_r)$ and $u^1_r>0$ and harmonic in $B_{1/10}$. Hence by standard Harnack inequality, either
$$u^1_r \geq p(x) + c \eps \quad \text{in $B_{1/20}$},$$ and we are done, or
$$u^1_r \leq p(x) + (1-c) \eps \quad \text{in $B_{1/20}$}.$$
Finally, by \eqref{trequarti}, for $\eps$ sufficiently small $$|U_r| \leq u_r^1 + C \eps^{{3/2}}\leq  p(x) + \left(1-\frac{c}{2}\right) \eps \quad \text{in $B_{1/20}$},$$
with $C,c>0$ universal.
\end{proof}

\section{The Improvement of flatness}

In this section we prove our main result, an improvement of flatness lemma, from which the desired Theorem \ref{main} follows by standard techniques (see for example \cite{CSbook}.)

First, we recall some known facts.
Consider the following boundary value problem, which is the linearized problem arising from our improvement of flatness technique:
\be\begin{cases}\label{linearized}
 \Delta \widetilde U=0 & \text{in $B_{1/2} \cap \{x_n>0\},$}\\
\frac{\p}{\p x_n}(\tilde u^1)=0, \quad \tilde  u^i=0 \quad i=2, \ldots, m &\text{on $B_{1/2} \cap \{x_n=0\}$},
\end{cases}\ee
with $\widetilde{U}=(\tilde u^1, \ldots, \tilde u^m) \in C(B_{1/2}\cap \{x_n \geq 0\}, \R^m)$.
The Neumann problem for $\tilde u^1$ is satisfied in the following viscosity sense.

\begin{defn}\label{defN} If $P(x)$ is a quadratic polynomial touching
$\tilde u^1$ by below (resp. above) at $\bar x \in B_{1/2} \cap \{x_n
\geq 0\}$, then

\

(i) if $\bar x \in B_{1/2} \cap \{x_n >0\}$ then $\Delta P \leq 0,$
(resp. $\Delta P \geq 0$) i.e $\tilde u^1$ is harmonic in the
viscosity sense;

\

(ii) if $\bar x \in B_{1/2} \cap \{x_n=0\}$ then $P_n(\bar x) \leq
0$ (resp. $P_n(\bar x) \geq 0$.)

\end{defn}

As usual, in the definition above we can
choose polynomials $P$ that touch $\tilde u^1$ strictly by
below/above . Also, it suffices to verify that (ii) holds for polynomials
$\tilde P$ with $\Delta \tilde P
> 0$. \\

Since the linearized problem \eqref{linearized} is a system completely decoupled, the regularity of solutions follows immediately by standard theory (see also Lemma 2.6 in \cite{D}.)

\begin{lem}\label{regularity}
Let $\widetilde U$ be a viscosity solution to \eqref{linearized} in $\Omega$. Then $\widetilde{U}$ is a classical solution to \eqref{linearized} and $\widetilde{U} \in C^\infty(B_{1/2} \cap \{x_n\geq 0\};\R^m)$.
\end{lem}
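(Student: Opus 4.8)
The plan is to show that the viscosity notion in Definition \ref{defN} is equivalent, after reflection, to the standard viscosity notion for harmonicity, so that classical elliptic regularity applies componentwise. The system \eqref{linearized} decouples: the components $\tilde u^i$ with $i=2,\ldots,m$ solve $\Delta \tilde u^i=0$ in $B_{1/2}\cap\{x_n>0\}$ with homogeneous \emph{Dirichlet} data on $\{x_n=0\}$, while $\tilde u^1$ solves $\Delta \tilde u^1=0$ with homogeneous \emph{Neumann} data on $\{x_n=0\}$, both in the viscosity sense of Definition \ref{defN}. For $i\geq 2$ there is nothing to prove beyond the classical fact that a viscosity-harmonic function with continuous boundary values is the classical Dirichlet solution, hence smooth up to $\{x_n=0\}$ (indeed, the odd reflection across $\{x_n=0\}$ is viscosity-harmonic in all of $B_{1/2}$, so harmonic, hence real-analytic). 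The content is therefore entirely in the Neumann component $\tilde u^1$.

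For $\tilde u^1$, the key step is the \textbf{even reflection}: set
\be
v(x',x_n):=\begin{cases}\tilde u^1(x',x_n), & x_n\geq 0,\\[2pt] \tilde u^1(x',-x_n), & x_n<0,\end{cases}
\ee
and claim that $v$ is harmonic in $B_{1/2}$ in the viscosity sense. Away from $\{x_n=0\}$ this is immediate from part (i) of Definition \ref{defN} together with the symmetry of $v$. The only point to check is that $v$ cannot be touched strictly by below (resp. above) at a point $\bar x=(\bar x',0)$ by a polynomial $P$ with $\Delta P>0$ (resp. $\Delta P<0$). By the remark following Definition \ref{defN} it suffices to treat strict touching with $\Delta P$ of a definite sign. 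Suppose $P$ touches $v$ strictly by below at $\bar x$ with $\Delta P>0$. Replacing $P(x',x_n)$ by $\tfrac12\big(P(x',x_n)+P(x',-x_n)\big)$ — which still touches $v$ strictly by below at $\bar x$, still has positive Laplacian, and is even in $x_n$ — we may assume $P$ is even in $x_n$, so $P_n(\bar x)=0$. But then $P$ touches $\tilde u^1$ strictly by below at the boundary point $\bar x$, and part (ii) of Definition \ref{defN} forces $P_n(\bar x)\le 0$, which is consistent; that alone is not a contradiction, so one must extract more. The standard device is to perturb: for $\delta>0$ consider $P_\delta(x):=P(x)+\delta x_n$, which near $\bar x$ is still below $v$ on $\{x_n\ge 0\}$ up to an error controlled on a small ball, detach it and slide it up until it touches $v$ again; the new contact point $x_\delta$ lies in $\{x_n>0\}$ for $\delta$ small (since $P_{\delta,n}(\bar x)=\delta>0$ pushes the graph strictly below $v$ on $\{x_n=0\}$ near $\bar x$ after a downward shift), where $\Delta P_\delta=\Delta P>0$ contradicts part (i). This is the argument of Lemma 2.6 in \cite{D}, and I would simply cite it after noting the decoupling; writing out the perturbation is the only mildly delicate point, and it is the step I expect to be the main (if modest) obstacle.

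Once $v$ is viscosity-harmonic in $B_{1/2}$, it is classically harmonic, hence $C^\infty$ (real-analytic) in $B_{1/2}$; by evenness $\partial_n v\equiv 0$ on $\{x_n=0\}$, so $\tilde u^1=v|_{\{x_n\ge 0\}}$ is a classical solution of the Neumann problem and is smooth up to $\{x_n=0\}$. Combining with the Dirichlet components, $\widetilde U$ is a classical solution of \eqref{linearized} and $\widetilde U\in C^\infty(B_{1/2}\cap\{x_n\ge 0\};\R^m)$, as claimed. I would therefore keep the proof to a couple of lines: invoke the decoupling, the even/odd reflection, and Lemma 2.6 of \cite{D} for the viscosity-to-classical passage, so that no genuinely new computation is required here.
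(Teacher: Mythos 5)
Your proposal is correct and takes the same route the paper intends: the paper supplies no written proof for this lemma, only the observation that \eqref{linearized} decouples into scalar Dirichlet problems (for $i\ge 2$) and a scalar viscosity Neumann problem (for $i=1$), together with a pointer to Lemma~2.6 of \cite{D}, which is exactly the even-reflection-plus-perturbation argument you spell out. Your sketch of that perturbation — symmetrize the test polynomial so $P_n(\bar x)=0$, add $\delta x_n$, slide to a new contact point, and rule out contact on $\{x_n=0\}$ via Definition~\ref{defN}(ii) (since $(P_\delta)_n=\delta>0$ there) and on the spherical boundary via the strict-touching margin, leaving an interior contact where $\Delta P_\delta>0$ contradicts harmonicity — is the content of the cited lemma and is sound.
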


We are now ready to state and prove our key lemma.

\begin{lem}[Improvement of Flatness] Let $U$ be a viscosity solution to \eqref{VOP} in $B_1$ satisfying the $\eps$-flatness assumption in $B_1$
\be\label{flat1}  |U- f^1 x_n^+| \leq \eps \quad \text{in $B_1$},
\ee
and
\be\label{non_d1} |U| \equiv 0 \quad \text{in $B_1 \cap \{x_n < - \eps\}$},\ee
with $0 \in F(U).$ If $0<r \leq r_0$ for a universal $r_0>0$, and $0<\eps \leq \eps_0$ for some $\eps_0$ depending on $r$, then
\be\label{flat_imp}
|U - \bar f \langle x, \nu\rangle^+| \leq \eps \frac r 2\quad \text{in $B_r$},
\ee
and
\be\label{non_dimpr} |U| \equiv 0 \quad \text{in $B_r \cap \left\{\langle x , \nu \rangle < - \eps \frac{r}{2}\right\}$}, \ee
with $|\nu -e_n|\leq C\eps, |\bar f-f^1| \leq C \eps$, for a universal constant $C>0.$
\end{lem}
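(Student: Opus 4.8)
The plan is to run the compactness–linearization scheme: extract a limit of suitably rescaled solutions, identify it as a solution of the linearized Neumann problem \eqref{linearized}, apply the $C^\infty$ regularity of Lemma \ref{regularity}, and read off the improved flatness from the first–order Taylor expansion of the limit. The Harnack inequality of Section~2 supplies the compactness. Fix $r\le r_0$ with $r_0$ universal, to be chosen last, and argue by contradiction: if the conclusion fails for this $r$, there are $\eps_k\to 0$ and viscosity solutions $U_k$ in $B_1$ with $0\in F(U_k)$ satisfying \eqref{flat1}--\eqref{non_d1} with $\eps=\eps_k$, but such that no admissible pair $(\nu,\bar f)$ gives \eqref{flat_imp}--\eqref{non_dimpr}. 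Set
\[
\tilde u^1_k:=\frac{u^1_k-x_n}{\eps_k},\qquad \widetilde{|U_k|}:=\frac{|U_k|-x_n}{\eps_k}\ \ \text{on }\Omega(U_k)\cup F(U_k),\qquad \tilde u^i_k:=\frac{u^i_k}{\eps_k}\ (i\ge 2).
\]
By Lemma~\ref{change}, $|\tilde u^1_k|,|\widetilde{|U_k|}|\le 2$ and $|\tilde u^i_k|\le C(x_n+\eps_k)^+$ in $B_{3/4}$, and $0\le |U_k|-u^1_k\le C\eps_k(x_n+\eps_k)^+$, so that $\widetilde{|U_k|}-\tilde u^1_k\to 0$ locally uniformly in $\{x_n>0\}$. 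Corollary~\ref{AA} yields a uniform H\"older modulus for $\tilde u^1_k$ and $\widetilde{|U_k|}$ up to $\{x_n=0\}$ outside a ball shrinking with $\eps_k$; together with interior estimates for harmonic functions, along a subsequence these converge — uniformly on compacta of $B_{1/2}\cap\{x_n\ge 0\}$, and in Hausdorff distance for the graphs near $\{x_n=0\}$ — to H\"older functions $\tilde u^i_\infty$, with $\widetilde{|U_k|}\to\tilde u^1_\infty$. Since $U_k(0)=0$, all $\tilde u^i_\infty(0)=0$.

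I would then verify that $\widetilde U_\infty=(\tilde u^1_\infty,\dots,\tilde u^m_\infty)$ solves \eqref{linearized} in the viscosity sense of Definition~\ref{defN}. Interior harmonicity of each $\tilde u^i_\infty$ on $B_{1/2}\cap\{x_n>0\}$ is immediate, and $\tilde u^i_\infty\equiv 0$ on $\{x_n=0\}$ for $i\ge2$ follows from $|\tilde u^i_\infty|\le Cx_n^+$ inherited from Lemma~\ref{change}. The heart of the matter is the Neumann condition for $\tilde u^1_\infty$: if a paraboloid $P$ with $\Delta P>0$ touches $\tilde u^1_\infty$ strictly by below (resp.\ above) at $\bar x\in B_{1/2}\cap\{x_n=0\}$, then $P_n(\bar x)\le0$ (resp.\ $\ge0$). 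In the ``below'' case, assuming $P_n(\bar x)>0$, I would use $\tilde u^1_k\to\tilde u^1_\infty$ to slide the polynomial barrier $\varphi_k(x)=x_n+\eps_k\big(P(x)+c_k\big)$, $c_k\to0$, until it touches $u^1_k=\langle U_k,f^1\rangle$ from below at some $x_k\to\bar x$; since $\Delta\varphi_k=\eps_k\Delta P>0$ the contact cannot occur in $\Omega(U_k)$ (where $u^1_k$ is harmonic), nor in the interior of $\{|U_k|=0\}$ (where $u^1_k\equiv0$ while $\varphi_k$ is strictly subharmonic with non-vanishing gradient), so $x_k\in F(U_k)$; but $|\nabla\varphi_k|^2(x_k)=1+2\eps_k P_n(x_k)+O(\eps_k^2)>1$ for $k$ large, contradicting Definition~\ref{solutionnew}(i). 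The ``above'' case is symmetric: one slides a barrier of the same form above $|U_k|$, using that $|U_k|$ is subharmonic on all of $B_1$ (it is continuous, subharmonic where positive, and vanishes on $F(U_k)$ and beyond) and that $\widetilde{|U_k|}\to\tilde u^1_\infty$, reaching a contact point on $F(U_k)$ with $|\nabla\varphi_k|<1$, contradicting Definition~\ref{solutionnew}(ii). This transfer of the vectorial free–boundary conditions into the single scalar Neumann condition for the blow–up is the step I expect to be the main obstacle, and it is exactly where the shape of Definition~\ref{solutionnew} — the freedom to test with any component $\langle U,f\rangle$, and the subharmonicity of $|U|$ — is essential.

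Granting that $\widetilde U_\infty$ solves \eqref{linearized}, Lemma~\ref{regularity} gives $\widetilde U_\infty\in C^\infty(B_{1/2}\cap\{x_n\ge0\};\R^m)$; the even reflection of $\tilde u^1_\infty$ across $\{x_n=0\}$ is harmonic and bounded by $2$ near the origin, hence $C^2$ there with universal bounds. Combining this with $\tilde u^1_\infty(0)=0$, $\partial_n\tilde u^1_\infty(0)=0$, $\tilde u^i_\infty(0)=0$ and $\tilde u^i_\infty|_{\{x_n=0\}}\equiv0$ gives the first–order expansions at the origin
\[
\tilde u^1_\infty(x)=\langle\xi,x\rangle+O(|x|^2),\qquad \tilde u^i_\infty(x)=\mu_i\,x_n+O(|x|^2)\quad(i\ge2),
\]
with $\xi\in\R^n$, $\langle\xi,e_n\rangle=0$, and $|\xi|,|\mu_i|\le C$ universal.

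Finally I would unwind the rescaling. Choosing $\nu:=(e_n+\eps_k\xi)/|e_n+\eps_k\xi|$ and $\bar f:=(f^1+\eps_k\sum_{i\ge2}\mu_i f^i)/|f^1+\eps_k\sum_{i\ge2}\mu_i f^i|$ gives $|\nu-e_n|\le C\eps_k$, $|\bar f-f^1|\le C\eps_k$, and $x_n+\eps_k\langle\xi,x\rangle=\langle x,\nu\rangle+O(\eps_k^2|x|)$; plugging the expansions (through $u^i_k=x_n\delta_{i1}+\eps_k\tilde u^i_k$ and the uniform convergence from Corollary~\ref{AA}) into $U_k=\sum_i u^i_k f^i$ and collecting errors yields $|U_k(x)-\bar f\langle x,\nu\rangle^+|\le\eps_k\big(Cr^2+o_k(1)\big)$ on $\Omega(U_k)\cup F(U_k)$, and then, by a short ray argument using that $F(U_k)\cap B_{2r}\subset\{|\langle x,\nu\rangle|\le\eps_k(Cr^2+o_k(1))\}$, on all of $B_r$; hence \eqref{flat_imp} holds once $r_0$ is fixed with $Cr_0\le1/4$ and $k$ is large. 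For \eqref{non_dimpr}: the function $(u^1_k)^-$ is subharmonic in $B_1$ (the zero–extension of the positive harmonic function $-u^1_k$ off the open set $\{u^1_k<0\}\subset\Omega(U_k)$), is bounded by $\eps_k$, and vanishes on $\{x_n\ge\eps_k\}$, so a linear barrier gives $(u^1_k)^-\le C\eps_k(\eps_k-x_n)^+\le C\eps_k^2$ in $B_{1/2}$; on the other hand, if some $x\in B_r$ with $\langle x,\nu\rangle<-\eps_k r/2$ had $|U_k|(x)>0$, then $x\in\Omega(U_k)$ and the expansion forces $u^1_k(x)=\langle x,\nu\rangle+\eps_k\theta_k(x)$ with $|\theta_k|\le Cr^2+o_k(1)$, whence $(u^1_k)^-(x)=-u^1_k(x)\gtrsim\eps_k r\gg\eps_k^2$, impossible for $k$ large. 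Thus $|U_k|\equiv0$ in $B_r\cap\{\langle x,\nu\rangle<-\eps_k r/2\}$, so $(\nu,\bar f)$ contradicts the choice of $U_k$, completing the proof.
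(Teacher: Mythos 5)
Your proposal is correct and follows the same three-step scheme as the paper (compactness via the Harnack estimate of Corollary~\ref{AA}, identification of the limit as a viscosity solution of the linearized Neumann system \eqref{linearized}, then regularity of the limit plus unwinding of the rescaling). The verification that $\widetilde U_\infty$ solves \eqref{linearized} — testing $u^1_k$ by below via Definition~\ref{solutionnew}(i), and testing $|U_k|$ by above using its subharmonicity together with $\widetilde{|U_k|}\to\tilde u^1_\infty$ and Definition~\ref{solutionnew}(ii) — matches the paper's Step~2 line for line, as does the choice of $\nu$ and $\bar f$ in Step~3.

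The one place where you genuinely diverge is the derivation of \eqref{non_dimpr}. The paper deduces it directly from the inclusion $F(U_k)\cap B_r\subset\{|\langle x,\nu\rangle|\le\eps_k r/4\}$ combined with $|U_k|\equiv 0$ on $\{x_n<-\eps_k\}$ and $|U_k|>0$ on $\{x_n>\eps_k\}$: the slab $B_r\cap\{\langle x,\nu\rangle<-\eps_k r/4\}$ is a connected open set avoiding $F(U_k)$ that meets the zero set, hence lies entirely in it. You instead observe that $(u^1_k)^-$ is a continuous subharmonic function in $B_1$ (zero-extension of the positive harmonic $-u^1_k$ off $\{u^1_k<0\}\subset\Omega(U_k)$), supported in the slab $\{|x_n|<\eps_k\}$ and bounded by $\eps_k$, whence $(u^1_k)^-\le C\eps_k^2$ in $B_{1/2}$ by mean value; this is incompatible with the lower bound $(u^1_k)^-\gtrsim \eps_k r$ that the first-order expansion would force at any point of $\Omega(U_k)\cap B_r$ lying below $\{\langle x,\nu\rangle=-\eps_k r/2\}$. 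Both arguments are sound; the paper's is shorter and purely topological, while yours is quantitative and would also rule out isolated ``islands'' of positivity without invoking connectedness — a robustness that could matter in a setting where the Hausdorff localization of $F(U_k)$ is weaker. Your ``short ray argument'' for promoting \eqref{flat_imp} from $\Omega(U_k)\cap B_r$ to all of $B_r$ is stated more loosely than the paper's explicit case split on the sign of $\langle x,\nu\rangle$ and the region $-\eps_k r/4<\langle x,\nu\rangle<0$, but the content is the same and the details are easy to supply.
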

\begin{proof}
Following the strategy of [D], we divide the proof in three different steps.\\

{\it Step 1 - Compactness.} Fixed $r \leq r_0$ with $r_0$ universal (the value of $r_0$ will be given in Step 3), suppose by contradiction that there exists $\eps_k \to 0$ and a sequence of solutions $(U_k)_k$ of \eqref{VOP} such that $0 \in F(U_k)$ and \eqref{flat1} and \eqref{non_d1} are satisfied for every $k$, i.e.
\be\label{contrad1}
|U_k- f^1 x_n^+| \leq \eps_k, \quad \text{in $B_1$,}
\ee
and
\be\label{contrad2}
|U_k| \equiv 0 \quad \text{in $B_1 \cap \{x_n < - \eps_k\}$},\ee
but  the conclusions \eqref{flat_imp} and \eqref{non_dimpr} of the Lemma do not hold.\\
Let us set
\be\label{tilde}
\widetilde U_k = \frac{U_k - f^1 x_n}{\eps_k}, \quad V_k = \frac{|U_k|-x_n}{\eps_k} \quad \text{in $\Omega(U_k):=B_1(U_k) \cup F(U_k) \subset \{x_n  \geq -\eps_k\}$.}
\ee
By the flatness assumptions \eqref{contrad1}-\eqref{contrad2}, $(U_k)_k$ and $(V_k)_k$ are uniformly bounded in $B_1$. Moreover, $F(U_k)$ converges to $B_1 \cap \{x_n = 0\}$ in the Hausdorff distance. Now, by Corollary \ref{AA} and Ascoli-Arzela, it follows that, up to a subsequence, the graphs of the components $\widetilde{u}^i_k$ of $\widetilde{U}_k$ and of $V_k$ over $B_{1/2} \cap (B_1(U_k) \cup F(U_k))$ converge in the Hausdorff distance to the graph of Holder continuous functions $\tilde u^i_\infty, V_\infty$ on $B_{1/2} \cap \{x_n \geq 0\}$, for every $i=1,\ldots,m$. Moreover, by Corollary \ref{AA}, %\sus{Qua credo ci voglia $\tilde u^1_\infty$ perche' la stima (2.9) nella dim del corollario ti dice che la tilde di $u_k^1$ e la tilde di $|U_k|$ convergono alla stessa cosa...controlla}
%\be\label{equal}\sus{V_\infty \equiv |\widetilde{U}_\infty|} \quad \text{in $B_{1/2} \cap \{x_n \geq 0\}.$}\ee
\be\label{equal}V_\infty \equiv \tilde{u}_\infty^1 \quad \text{in $B_{1/2} \cap \{x_n \geq 0\}.$}\ee
{\it Step 2 - Linearized problem.} We show that $\widetilde U_\infty$ satisfies the following problem in the viscosity sense:
\be\begin{cases}
 \Delta \widetilde U_\infty =0 & \text{in $B_{1/2} \cap \{x_n>0\},$}\\
\frac{\p}{\p x_n}(\tilde u^1_\infty)=0, \quad \tilde  u^i_\infty=0 \quad i=2, \ldots, m &\text{on $B_{1/2} \cap \{x_n=0\}.$}
\end{cases}\ee
In view of Lemma \ref{change}, part $(i)$, the conclusion for $i=2,\ldots, m$ is immediate. We are left with the case $i=1.$

First, let us consider the case a polynomial $P$ touches $\tilde{u}^1$ at $\overline{x} \in B_{1/2} \cap \{ x_n \geq 0\}$ strictly by below. Then the arguments of \cite{D} apply. Indeed, we need to show that
\begin{enumerate}
\item if $P$ touches $\tilde u^1_\infty$ at $\overline{x} \in B_{1/2}\cap \{x_n>0\}$, then $\Delta P(\overline{x}) \leq 0$,
\item if $P$ touches $\tilde u^1_\infty$ on $\{x_n=0\}$, then $P_n(\overline{x})\leq 0$,
\end{enumerate}
Since $\tilde u^1_k \to \tilde u^1_\infty$ uniformly on compacts, there exists $(x_k)_k \subset B_{1/2} \cap (B_1(U_k) \cup F(U_k))$, with $x_k \to \overline{x}$, and $c_k \to 0$ such that
$$
\tilde{u}^1_k(x_k) = P(x_k)+c_k,
$$
and $\tilde{u}^1_k \geq P+c_k$ in a neighborhood of $x_k$. From the definition of the sequence $\tilde u^1_k$, we infer $u^1_k(x_k)=Q(x_k)$ and $u^1\geq Q$ in a neighborhood of $x_k$, with
\be
Q(x)= x_n +\eps_k (P(x)+c_k)\ee
If $\overline{x}\in B_{1/2}\cap \{x_n >0\}$, then $x_k \in B_{1/2}(U_k)$, for $k$ sufficiently large, and hence since $Q$ touches $u^1_k$ by below at $x_k$
$$
\Delta Q(x_k) = \eps_k \Delta P(x_k) \leq 0,
$$
which leads to $\Delta P(\overline{x}) \leq 0$ as $k\to \infty$.\\
Instead, if $\overline{x} \in B_{1/2}\cap \{x_n =0\}$, then we can assume $\Delta P >0$. It is not restrictive to suppose that, for $k$ sufficiently large, $x_k \in F(U_k)$. Otherwise $x_{k_n} \in B_{1/2}(U_{k_n})$ for a subsequence $k_n \to \infty$ and in that case $\Delta P(x_{k_n}) \leq 0$, in contradiction with the strict subharmonicity of $P$.\\
Thus, for $k$ large,  $x_k \in F(U_k)$. Then noticed that $\nabla Q = e_n + \eps \nabla P$ and $\abs{\nabla Q} > 0$ for $k$ sufficiently large, since $Q$ touches $u^1_k$ by below, by Definition \ref{solutionnew} we deduce that $\abs{\nabla Q}^2(x_k)\leq 1$, i.e.
      $$
      \eps_k \abs{\nabla P}^2(x_k) + 2 P_n(x_k) \leq 0.
      $$

Passing to the limit as $k \to \infty$ we obtain the desired conclusion.\\

Consider now the case when $P$ touches $\tilde{u}^1_\infty$ at $\overline{x} \in B_{1/2} \cap \{ x_n \geq 0\}$ strictly by above. Since the case $\overline{x}\in B_{1/2} \cap \{x_n >0\}$ follows the same reasoning of the previous part, we move on to the case $\overline{x} \in B_{1/2} \cap \{x_n=0\}$ and assume that $\Delta P <0$. We claim that
$$P_n(\overline{x})\geq 0.$$

Since $\tilde u^1_\infty = V_\infty,$ for $k$ sufficiently large we get
 $
 \abs{U_k}(x_k) = Q(x_k)
 $
 and $
 \abs{U_k} \leq Q
 $
 in a neighborhood of $x_k \to \bar x$, with
 $$
 Q(x)=x_n + \eps_k (P - c_k), \quad c_k \to 0.
 $$ As before, since $|U_k|$ is subharmonic, we can assume that $x_k \in F(U_k).$
 By the definition of viscosity solution, we deduce that $\abs{\nabla Q}^2(x_k)\geq1$, i.e.
 $$
 \eps_k \abs{\nabla P}^2(x_k) + 2P_n(x_k) \geq 0,
 $$
 which leads to the claimed result as $k\to \infty$.
{\color{white}.}\\

{\it Step 3 - Improvement of flatness.} Since $(\widetilde{U}_k)_k$ is uniformly bounded in $B_1$, we get a uniform bound on $|\tilde u^i_\infty|$, for every $i=1,\ldots,m$.
Furthermore, since $0 \in F(\widetilde{U}_\infty)$, by the regularity result in Lemma \ref{regularity} we deduce that
$$
\abs{\tilde u^i_\infty (x) - \langle \nabla \tilde u^i_\infty(0),x\rangle}\leq C_0 r^2 \quad \text{in $B_r \cap \{x_n \geq 0\}$}, \quad i=1,\ldots, m
$$
for a universal constant $C_0>0$. On one side, since $\partial_{x_n}(\tilde u^1_\infty)=0$ on $B_{1/2}\cap \{x_n=0\}$, we infer
$$
\langle x', \tilde{\nu}^1 \rangle - C_0 r^2 \leq \tilde{u}^1_\infty(x) \leq \langle x', \tilde{\nu}^1\rangle + C_0 r^2 \quad \text{in $B_r \cap \{x_n\geq 0\}$},
$$
where $\tilde{\nu}^1 = \nabla \tilde{u}^1_\infty (0)$ is a vector in the variables $x_1,\ldots,x_{n-1}$, with $\abs{\tilde{\nu}^1}\leq M$, for some $M$ universal constant. Thus, for $k$ sufficiently large, there exists $C_1$ such that
$$
\langle x', \tilde{\nu}^1 \rangle - C_1 r^2 \leq \tilde{u}^1_k(x) \leq \langle x', \tilde{\nu}^1\rangle + C_1 r^2 \quad \text{in $\Omega(U_k) \cap B_r$}
$$
and exploiting the definition of $\tilde u^1_k$ we read
$$
x_n + \eps_k\langle x', \tilde{\nu}^1 \rangle - \eps_k C_1 r^2 \leq u^1_k(x) \leq x_n + \eps_k \langle x', \tilde{\nu}^1\rangle + \eps_k C_1 r^2 \quad \text{in $\Omega(U_k) \cap B_r.$}
$$
Thus, called
$$
\nu = \frac{(\eps_k \tilde{\nu}^1,1)}{\sqrt{1+\eps_k^2\abs{\tilde \nu^1}^2}} \in S^{n},
$$
since for $k$ sufficiently large $1\leq \sqrt{1+\abs{\tilde{\nu}^1}^2\eps_k^2} \leq 1+ M^2\eps_k^2/2$, we deduce that
\be\label{improv.1}
\langle x, \nu \rangle -\frac{r}{2}M^2\eps_k^2- \eps_k C_1 r^2 \leq u^1_k(x) \leq \langle x, \nu\rangle +\frac{r}{2}M^2\eps_k^2+ \eps_k C_1 r^2 \quad \text{in $\Omega(U_k) \cap B_r$}.
\ee
It follows that, for $r_0 \leq 1/(8C_1)$ and $k$ large,
\be\label{zero} F(U_k) \cap B_r \subset \left\{|\langle x, \nu \rangle| \leq \eps_k \frac{r}{4}\right\}\ee
and since $|U_k| \equiv 0$ in $\{x_n <-\eps_k\}$ and  $|U_k| >0$ in $\{x_n >\eps_k\}$, we conclude that
\be\label{zero+} |U_k| \equiv 0 \quad \text{in $B_r \cap \left\{\langle x, \nu\rangle  < - \eps_k \frac{r}{4}\right\}$},\ee
and
\be\label{pos} |U_k|>0 \quad \text{in $B_r \cap \left\{\langle x, \nu\rangle >\eps_k \frac r 4\right\}.$}\ee
Also, for $r_0 < 1/(8C_1)$ and $k$ large,
\be\label{improv.2}
\langle x, \nu \rangle -\frac{r}{8}\eps_k\leq u^1_k(x) \leq \langle x, \nu\rangle +\frac{r}{8}\eps_k\quad \text{in $\Omega(U_k) \cap B_r$}.
\ee

On the other, since $\tilde{u}^i_\infty = 0$ on $B_{1/2}\cap \{x_n=0\}$, for $i=2,\ldots,m$ we get
$$
\langle x, \tilde\nu^i\rangle - C_0 r^2 \leq \tilde{u}^i_\infty(x) \leq \langle x, \tilde\nu^i\rangle + C_0 r^2 \quad \text{in $B_r \cap \{x_n\geq 0\}$},
$$
where $\tilde\nu^i=M^ie_n, |M^i| \leq M$, for $M$ universal constant,  and for $k$ sufficiently large
%$$
%\langle x, \tilde\nu^i\rangle - C_1 r^2 \leq \tilde{u}^i_k(x) \leq \langle x, \tilde\nu^i\rangle + C_1 r^2 \quad \text{in $B_r^+(U_k)$}
%$$
%and exploiting the definition of $\tilde u^1_k$ we read
\be\label{improv.i}
|u^i_k(x) - M^ix_n \eps_k| \leq\frac{r}{8}\eps_k \quad \text{in $\Omega(U_k) \cap B_r$}.
\ee
Finally, set
$$
\bar f^1_k=\frac{1}{\sqrt{1+\eps_k^2\sum_{i=2}^m \abs{M^i}^2}} \quad\text{and}\quad \bar f^i_k=\eps_k \frac{M^i}{\sqrt{1+\eps_k^2\sum_{i=2}^m \abs{M^i}^2}}
$$
for $i=2,\ldots,m$. Thus, by \eqref{improv.2} we get for $k$ large,
\be\label{ineq.1}
\abs{u^1_k - \bar f_k^1 \langle x, \nu \rangle } %= \abs{u^1_k - \langle x, \nu \rangle }^2
%\leq \eps_k\frac{r}{8}
\leq \abs{u^1_k - \langle x,\nu \rangle }
+ \frac{(m-1)M^2}{2}\eps_k^2 r\leq \frac{r}{4}\eps_k, \quad \text{in $\Omega(U_k) \cap B_r$},
\ee
and similarly, by \eqref{improv.i}, we obtain for the other components
\begin{align}\label{ineq.i}
\begin{aligned}
\abs{u^i_k - \bar f_k^i \langle x, \nu\rangle }
& \leq \abs{u^1_k - \eps_k M^i x_n } + \eps_k M r \abs{e_n - f^1_k \nu}\\
& \leq \frac{r}{8}\eps_k + \frac m 2 \eps_k^3 M^3 r \leq \frac{r}{4}\eps_k, \quad \text{in $\Omega(U_k) \cap B_r$}.
%\leq \eps_k\frac{r}{8}+\frac{m}{2}\eps^3_k \abs{M}^3 r
%\leq x_n (\eps_k M^i-\bar f^i\nu_n) + M\eps_k^2r + \eps_k\frac{r}{4}\\
%&\leq  C r\eps_k^3 +M\eps_k^2r + \eps_k\frac{r}{8}
%\leq \frac{r}{4}\eps_k, \quad \text{in $\Omega(U_k) \cap B_r$}
\end{aligned}
\end{align}
Summing \eqref{ineq.1} and \eqref{ineq.i} for all $i=2,\cdots,m$ we finally get
$$
\abs{U_k - f_k \langle x, \nu\rangle }\leq \eps_k \frac{r}{4}\quad \text{in $\Omega(U_k) \cap B_r$.}
$$
In view of \eqref{zero+}, we only need to show that we can replace $\langle x, \nu\rangle$ with its positive part, in the region
$$-\frac{r}{4} \eps_k < \langle x, \nu \rangle <0.$$ Since in this region,
$$|U_k | \leq |U_k -\bar f_k\langle x, \nu\rangle| + |\bar f_k \langle x, \nu\rangle| < \eps_k \frac{r}{4},$$
we obtain,
$$|U_k -\bar f_k \langle x, \nu\rangle^+| \leq \eps_k \frac{r}{2},  \quad \text{in $\Omega(U_k) \cap B_r$}.$$ In view of \eqref{pos}, this inequality holds in $B_r$, which combined with \eqref{zero+} leads us to a contradiction.
\end{proof}

\bibliographystyle{plain}
\bibliography{bibliography}

\end{document}